\numberwithin{equation}{section}
\theoremstyle{plain}
\newtheorem{theorem}{Theorem}[section]
\newtheorem{lemma}[theorem]{Lemma}
\newtheorem{proposition}[theorem]{Proposition}
\newtheorem{corollary}[theorem]{Corollary}
\theoremstyle{definition}
\newtheorem{remark}[theorem]{Remark}
\newcommand{\Gra}{{L}^{\neq\Delta}_{\ep,\delta}}
\newcommand{\Pic}{{S}^{\neq\Delta}_{\ep,\delta}}
\newcommand{\Ugra}{{{\Lambda}}_{\ep,\delta}}
\newcommand{\Upic}{{\Sigma}_{\ep,\delta}}
\newcommand{\giu}{\Omega_{\ep}^{\eta-}}
\newcommand{\sba}{\Omega_{\ep}^{\eta+}}
\newcommand{\tsba}{\tilde\Omega_{\ep}^{\eta+}}
\newcommand{\BorBu}{B_{\ep,\delta}^{\eta-}}
\newcommand{\BorCa}{B_{\ep,\delta}^{\eta+}}
\newcommand{\BorMi}{B_{\ep,\delta}^{\eta,\mathrm{n}}}
\newcommand{\BorGra}{\partial_{\ep,\delta}^{\eta+}}
\newcommand{\NoBorGra}{\not\partial_{\ep,\delta}^{\eta+}}
\newcommand{\F}{\mathcal{F}}
\newcommand{\A}{\mathcal A}
\newcommand{\M}{\mathcal M}
\newcommand{\N}{\mathbb{N}}
\newcommand{\Z}{\mathbb{Z}}
\newcommand{\R}{\mathbb{R}}
\newcommand{\C}{\mathcal{C}}
\newcommand{\E}{\mathcal{E}}
\newcommand{\av}{\mathrm{av}}
\newcommand{\supp}{\mathrm{supp}\;}
\newcommand{\ffi}{\varphi}
\newcommand{\ud}{\,\mathrm{d}}
\renewcommand\tilde{\widetilde}
\newcommand\e{\varepsilon}
\newcommand{\newatop}{\genfrac{}{}{0pt}{1}}
\newcommand{\res}{\mathop{\hbox{\vrule height 7pt width .5pt depth 0pt
\vrule height .5pt width 6pt depth 0pt}}\nolimits}
\def\A{\mathcal{A}}
\newcommand\ep{\varepsilon}
\newcommand{\weakly}{\rightharpoonup}
\newcommand{\weakstar}{\overset{\ast}{\rightharpoonup}}
\def\Om{\Omega}
\def\E{\mathcal{E}}
\def\H{\mathscr{H}}
\def\F{\mathcal{F}}
\def\O{\mathcal{O}}
\def\1{\mathbf{1}}
\def\loc{\mathrm{loc}}
\def\XXint#1#2#3{{\setbox0=\hbox{$#1{#2#3}{\int}$ }
\vcenter{\hbox{$#2#3$ }}\kern-.57\wd0}}
\def\eps{\varepsilon}
\DeclareMathOperator*{\argmin}{\arg\!\min}
\def\E{\mathcal{E}}
\newcommand{\I}{\mathcal{I}}
\newcommand{\Ed}{\mathrm{Ed}}
\newcommand{\Wi}{\mathrm{Wire}}
\newcommand{\ext}{\mathrm{ext}}
\newcommand{\inte}{\mathrm{int}}
\newcommand{\vth}{\vartheta}
\newcommand{\Per}{\mathrm{Per}}
\title[$\Gamma$-convergence of the Heitmann-Radin  energy to the crystalline perimeter] {$\Gamma$-convergence of the Heitmann-Radin sticky disc energy to the crystalline perimeter}
\author[L. De Luca]
{L. De Luca}
\address[Lucia De Luca]{
SISSA, Via Bonomea 265, I - 34136 Trieste, Italy
}
\email[L. De Luca]{ldeluca@sissa.it}
\author[M. Novaga]
{M. Novaga}
\address[Matteo Novaga]{
Dipartimento di Matematica, Universit\`a di Pisa, Largo Bruno Pontecorvo 5, I - 56127 Pisa, Italy
}
\email[M. Novaga]{matteo.novaga@unipi.it}
\author[M. Ponsiglione]
{M. Ponsiglione}
\address[Marcello Ponsiglione]{
Dipartimento di Matematica ``Guido Castelnuovo'', Sapienza Universit\`a di Roma, P. le Aldo Moro 5, I - 00185 Roma, Italy
}
\email[M. Ponsiglione]{ponsigli@mat.uniroma1.it}
\begin{document}

\begin{abstract}
We consider low energy   configurations  for the Heitmann-Radin sticky discs functional, in the limit of diverging number of discs.  More precisely, we renormalize the Heitmann-Radin potential by  subtracting the minimal energy per particle, i.e., the so called kissing number. 
For configurations whose energy scales like the perimeter, we prove a compactness result which shows the emergence of polycrystalline structures: 
The empirical measure converges to a set of finite perimeter, while a  microscopic variable, representing the orientation of the underlying lattice, 
converges to a locally constant function.   

Whenever the limit configuration is a single crystal, i.e., it has constant orientation, we show that the  $\Gamma$-limit is the anisotropic perimeter, corresponding to the Finsler metric determined by the orientation of the  single crystal.      
\vskip5pt
\noindent
\textsc{Keywords:}  sticky discs, crystallization, $\Gamma$-convergence, polycrystals
\vskip5pt
\noindent
\textsc{AMS subject classifications:} 74C20, 82B24, 49J45
\end{abstract}

 \maketitle
 
\tableofcontents

\section*{Introduction}
Potentials that are attractive at long range and repulsive at very short range model many relevant systems and phenomena; among them, crystallization has a prominent place. 
A phenomenological potential with these features, particularly popular in Materials Science,  is the {\it Lennard-Jones} potential. 
Maybe the most basic potential mimicking attractive/repulsive interactions and leading to crystallization is the one proposed by  Heitmann and Radin \cite{HeRa}. 
In their model, particles are identified with sticky discs which maximize the number of their contact points without overlapping each other. 
More precisely, given $N$ discs in the plane, having diameter all equal to one and centered  in $x_1,\ldots,x_N$\,, the corresponding  Heitmann-Radin energy is given by
$$
E(x_1,\ldots,x_N) :=\frac 1 2\sum_{i\neq j}V(|x_j-x_i|)\,,
$$
where $V$ is the Heitmann-Radin  potential defined by
 $$
V(r):=\left\{\begin{array}{ll}
+\infty&\textrm{if }r<1\,,\\
-1&\textrm{if }r=1\,,\\
0&\textrm{if }r> 1\,.
\end{array}\right.
$$
In this paper we are interested in compactness and convergence results for almost minimizers of the energy $E$\,, in the limit as $N\to\infty$\,.
Before describing our approach we recall the main results about the minimizers of the energy $E$ for finite $N$ and on their behavior as $N\to\infty$\,.
In the seminal paper \cite{HeRa}, Heitmann and Radin prove that, for every fixed $N\in\N$\,, all the minimizers of the energy $E$ among the configurations $X=\{x_1,\ldots,x_N\}$ are subsets of an equilateral triangular lattice.
Their proof of this result relies on an ansatz on the exact value of the minimal energy which was previously  provided by Harborth \cite{Har}.
Moreover, the authors exhibit some explicit minimizers for all number $N$ of particles; such  minimizers  are regular hexagons with side $s$ whenever $N=N_s=1+6+\ldots+6s$\,, whereas, for general $N_s<N<N_{s+1}$\,, they are obtained by nestling the remaining discs around the  boundary of the regular hexagon constructed for $N_s$\,. 
Clearly, the empirical measures associated to such minimizers  converge (suitably scaled) to a macroscopic hexagon, referred to as {\it Wulff shape}.
However, the minimizing configurations are in general non-unique; in \cite{DLF2},  the authors characterize, through an explicit formula, all the number of particles $N$ for which the minimizer is  (up to a rotation and translation) unique.

In \cite{AFS} it is proven that, for any sequence of minimizers, the empirical measures converge to a Wulff shape. 
In \cite{S}, a refined  analysis   for minimizers of the energy $E$ for $N$ particles shows that the scaling law for the fluctuation about the asymptotic Wulff shape is $C\,N^{3/4}$ for some $C>0$\,, whereas in \cite{DPS} the optimal constant $C$ is explicitly provided.

It is well known that the Wulff shape is the solution of the isoperimetric problem for a suitable anisotropic perimeter. It is then clear the link between the Heitmann-Radin energy and perimeter-like functionals. This link has been exploited  in \cite{AFS} where  it is proven that, for configurations of $N$ particles lying on the triangular lattice and with prescribed energy upper bound scaling like a perimeter, the energy functionals  $\Gamma$-converge, as $N\to +\infty$\,,  to the anisotropic perimeter of the macroscopic shape.  Clearly, minimizing the $\Gamma$-limit with a volume constraint one obtains  the Wulff shape, and this gives back that the empirical measure of minimizers converge to the Wulff shape. In \cite{DLF1}, exploiting a discrete Gauss-Bonnet formula, for  finite $N$\,,   the energy of any configuration is rewritten in   terms of a suitable discrete notion of perimeter of the graph generated by the $N$ particles. 

In this paper we consider the asymptotic behavior of the Heitmann-Radin energy, in the perimeter-scaling regime, without assuming that the particles lie on a reference lattice. In this respect, we prove that the Heitmann-Radin energy enforces crystallization not only for minimizers, but also for low energy configurations. 
But while for minimizers the  orientation of the underlying lattice is constant, for almost minimizers  global  orientation  can be disrupted, giving rise to polycrystalline structures. 
Moreover, we  compute the $\Gamma$-limit of the energy functionals whenever the limiting orientation is constant, i.e., in the case of a single crystal. 

 We now describe in more details  our approach.    
Consider a configuration of $N$ particles. We recall that, for minimizers, the particles belong to a triangular lattice, and most of them (for large $N$) have exactly six nearest neighbors. In this respect, the minimal energy per particle is equal to $-6$\,, namely the opposite of the {\it kissing number}. Removing this bulk contribution  from the energy, a surface term  remains, which corresponds to the energy induced by the particles that have  less than six neighbors. 
At a first glance, these particles can be identified  as boundary particles. 

In order to introduce an internal variable, representing the local orientation of the crystal lattice, we observe that, at least for minimizers, 
most of the particles are vertices of some equilateral triangle. To these  triangles one can easily associate some orientation, for instance through the angles between its edges and  some reference straight line. 
Since triangular faces, edges and other 
geometrical objects 
play a role in our analysis, it is convenient to deal with the notion  of discrete graph generated by the particles; in this respect,  we will adopt the terminology and tools introduced in \cite{DLF1}. 

To any configuration of particles, we associate an empirical measure and a piecewise constant orientation, defined on the triangular faces of the graph. We prove that, in the perimeter-scaling energy regime, the empirical measures (suitably scaled) converge - up to a subsequence - to the characteristic function of some set $\Om$\,, representing the macroscopic (poly)crystal. Moreover, the regime we deal with provides uniform bounds for the $SBV$ norm of the function representing the microscopic orientation of the underlying lattice. In turn, we prove that the orientation converges to some 
limit function 
$\theta \in SBV(\Om)$\,, where $\theta=\sum_{j\in J}\theta_j\chi_{\omega_j}$ with $J   
\subseteq\N$ and 
 $\{\omega_j\}_{j}$ being a Caccioppoli partition of $\Omega$\,. 
 Here each  $\omega_j$ represents a grain of the polycrystal $\Om$\,, endowed with orientation $\theta_j$\,. 
 
 In the second part of the paper, we address the problem of computing the limit energy functional. We achieve this task in the case of a single crystal: If the orientation $\theta$ is constant, then the $\Gamma$-limit is given by the anisotropic perimeter of $\Om$\,, where the anisotropy corresponds to a Finsler metric whose Wulff shapes are hexagons with orientation  determined by $\theta$\,. This result clearly agrees with that of \cite{AFS}, the novelty being that here we do not assume that the particles belong to some reference lattice.
 The proof of the $\Gamma$-liminf inequality, without assuming crystallization  exploits the representation formulas, introduced in \cite{DLF1}, that allow to rewrite the Heitmann-Radin energy in terms of the discrete perimeter of the graph generated by the particles.

For polycrystals, where the orientation $\theta$ is not constant, one expects some additional surface contribution,  induced by grain boundaries. The sharp grain boundary energy, and in turn the $\Gamma$-limit in the general case, are not provided in this paper. Some upper and lower bounds are given in Proposition \ref{pcno}. Such bounds, although non optimal, are enough to show that, depending on the shape of the limit set $\Om$\,, both the single crystal and the polycrystal structure could be energetically favorable. 

A natural question is whether our results can be extended to more general interaction potentials, which are less rigid and take into account also elastic deformations. The  crystallization problem for  general potentials, both  for a finite and infinite number of particles,  is still an open research field which attracts much interest since decades \cite{BL}. 
For Lennard Jones type potentials, in \cite{T} it is proven  that the asymptotic energy density of minimizers is consistent with that of the regular triangular lattice. To our knowledge, our result is the first providing asymptotic (local) crystallization by compactness arguments for  almost minimizers of some explicit canonical, although very simple and rigid, interaction potential.   

The techniques and results developed in this paper share many analogies with  the so-called
{\it tessellation problems}. 
Among them we recall the  classical {\it honeycomb problem}, which consists in finding optimal clusters with minimal perimeter under volume constraints. Hexagonal tessellation is known to be optimal in the flat torus, thanks to the celebrated work of Hales \cite{H}. A more quantitative analysis of this result is developed in \cite{CarMag} and, in the framework of $\Gamma$-convergence, in \cite{AlbCar}.

In fact, our analysis also suggests new basic tessellation problems in $\Gamma$-convergence. A prototypical example is briefly described and analyzed in the Appendix, while further generalizations could deserve further investigations.

\section{Description of the problem}
In this section we introduce the notation we will use in the paper.
\subsection{The energy functionals.}
For every $\ep>0$ let $V_\ep:[0,+\infty)\to [0,+\infty]$ be the Heitmann-Radin sticky disc potential \cite{HeRa} defined by
$$
V_\ep(r):=\left\{\begin{array}{ll}
+\infty&\textrm{if }r<\ep\,,\\
-1&\textrm{if }r=\ep\,,\\
0&\textrm{if }r>\ep\,.
\end{array}\right.
$$

Given $X:=\{x_1,\ldots,x_N\}$ a finite subset of $\R^2$\,, the  Heitmann-Radin energy of $X$ is defined by
$$
E_\ep(X):=\frac 1 2\sum_{i\neq j}V_\ep(|x_j-x_i|)\,.
$$

Let $\M$ denote the class of Radon measures in $\R^2$ and let $\A$ be the class of empirical measures defined by 
$$
\A:=\Big\{\mu\in\M\,:\,\mu=\sum_{i=1}^N\delta_{x_i}\,,\,N\in\N\,,\,x_i\neq x_j\textrm{ for }i\neq j\Big\}\,.
$$

Note that there is a one-to-one correspondence $\I$ from $\A$ to the class of finite subsets of $\R^2$\,.
In view of this identification we can define the Heitmann-Radin  energies on measures by introducing the functionals $\E_\ep:\M\to [0,\infty]$ given by
\begin{equation}\label{defee}
\E_\ep(\mu)=\left\{\begin{array}{ll}
E_\ep(\I(\mu))&\textrm{if }\mu\in\A\,,\\
+\infty&\textrm{elsewhere.}
\end{array}\right.
\end{equation}
\subsection{Discrete graph representation.}

%
%
%
Let $\mu=\sum_{i=1}^{N}\delta_{x_i}\in\A$ be such that $\E_\ep(\mu)<+\infty$ and set $X=\I(\mu)$\,.
We say that $x_i$ and $x_j$ in $X$ are linked by an {\it edge}, or {\it bond}, if their mutual distance equals to $\ep$\, and we write $\{x_i,x_j\}$ for denoting such bond.  
We call $\Ed_\ep(X)$ the set of the bonds of $X$ and $(X,\Ed_\ep(X))$ the {\it bond graph} of the configuration.
 
\begin{figure}[h!]
{\def\svgwidth{400pt}
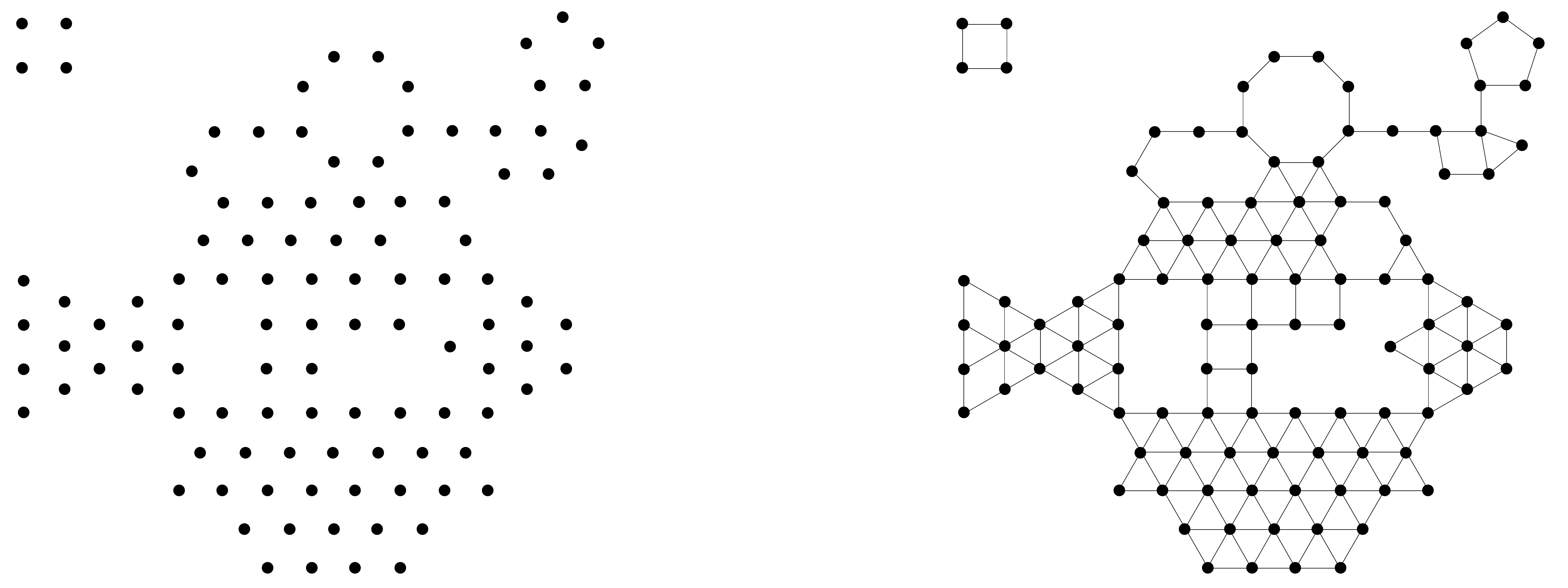}
\caption{Particle configuration and bond graph.}	\label{config_part}
\end{figure}

%
%
%
%
%
%

Since $\E_\ep(\mu)<+\infty$\,, simple geometric considerations easily imply that the bond graph is a {\it planar graph}, i.e., for any two different edges $\{x,y\}$ and $\{x',y'\}$\,, the corresponding line segments $[x,y]$ and $[x',y']$ do not cross.

It will be very useful to distinguish between ``interior'' and ``boundary'' edges. To this end we first provide the notion of {\it face} as it is introduced in \cite{DLF1}. 
By a face $f$ we mean any open and bounded subset of $\R^2$ which is nonempty, does not contain any point $x\in X$\,, and whose boundary is given by a cycle, i.e., $\partial f = \cup_{i=1}^k [x_{i-1},x_i]$ for some points $x_0,x_1,..,x_k=x_0\in X$ with $\{x_{i-1},x_i\}\in \Ed_\ep(X)$\,. Notice that the points $x_0,..,x_{k-1}$ do not need to be pairwise distinct, as a face might contain ``inner wire edges'' (see the definition below). Note also that for non-connected graphs, the definition above slightly differs from standard conventions because ring-shaped regions bounded by two cycles are not faces. 
We denote by $F_\ep(X)$ the set of faces of the bond graph $(X,\Ed_\ep(X))$\,.
Moreover, we define $F_\ep^\Delta(X)$ as the set of faces $f$ for which $k=3$ and $F_\ep^{\neq\Delta}(X):=F_\ep(X)\setminus F_\ep^\Delta(X)$\,.
\\[2mm]

Set $v_0(X):=\sharp X$\,, $v_1(X):=\sharp \Ed_\ep(X)$\,, and $v_2(X):=\sharp F_\ep(X)$\,, we define the Euler characteristic of the graph $(X,\Ed_\ep(X))$ as  
\begin{equation} \label{Euchar}
   \chi(X) := \sum_{k=0}^2 (-1)^k v_k(X)\,.
\end{equation}

Then we define the following sets:
\begin{itemize}
\item $\Wi(X)$ is the set of edges that either do not lie on the boundary of any face or
lie on the boundary of precisely one face but not on the boundary of its closure;

\item $\partial_{\ext}(X)$ is the set of edges lying on the boundary of precisely one face {and} on the boundary of its closure;

\item $\partial^1_{\inte}(X)$ is the set of edges lying on a triangular face and on a non triangular face;

\item $\partial^2_{\inte}(X)$ is the set of edges lying  on two non triangular faces.
\end{itemize}

By \cite[formula (3.7)]{DLF1}, we have
\begin{equation}\label{energy1}
E_\ep(X)+3 \sharp X 
= \sharp \partial_{\ext}(X)+ \sharp \partial^1_{\inte}(X)+2 \sharp \partial^2_{\inte}(X)-3\sharp F_\ep^{\neq\Delta}(X)+2\sharp \Wi(X)+3\chi(X) \,.
\end{equation}
Note that 
\begin{equation}\label{energy2}
E_\ep(X)+3 \sharp X =\frac 1 \ep \Per\Big(\bigcup_{f\in F_\ep(X)}f\Big)+\sum_{f\in F_\ep^{\neq \Delta}(X)}\Big(\frac{\Per(f)}{\ep}-3\Big) +2\sharp \Wi(X)+3\chi(X)\,,
\end{equation}
where $\Per(A)$ denotes the De Giorgi's perimeter of $A$ for every measurable set $A$\,.

With a little abuse of notation, we will often write $\Ed_\ep(\mu)$\,, $F_\ep(\mu)$\,, $F_\ep^\Delta(\mu)$ and $F_\ep^{\neq\Delta}(\mu)$ in place of $\Ed_{\ep}(\I(\mu))$\,, $F_\ep(\I(\mu))$\,, $F_\ep^\Delta(\I(\mu))$ and $F_\ep^{\neq\Delta}(\I(\mu))$ respectively.
\subsection{Grain orientations}
Let $\mu\in\A$ be such that $\E_\ep(\mu)<+\infty$\,. For every $\alpha\in\R$ we define
$$
P(\alpha):=\argmin\Big\{\Big|\alpha-k\frac{\pi}{3}\Big|\,:\,k\in\Z\Big\}\in\Z\,,
$$
with the convention that, if the $\argmin$ is not unique, then we choose the minimal one.
Clearly
\begin{equation}\label{orientabene}
P\Big(\alpha+j\frac{\pi}{3}\Big)=P(\alpha)+j\qquad\textrm{ for every }j\in\Z\,.
\end{equation}

Let $f\in F_\ep^{\Delta}(\mu)$ and let $w=e^{i\alpha_w}$ be a unit vector parallel to one of the edges of $f$ (with arbitrary orientation). We set
\begin{equation}\label{orient}
\alpha(f):=\alpha_w-P(\alpha_w)\frac{\pi}{3}\,\qquad \theta(f):=\alpha(f)+{\frac\pi 2}\,.
\end{equation}
Since all the edges of an equilateral triangle are obtained by rotating one fixed edge by an integer multiple of $\frac{\pi}{3}$\,, in view of \eqref{orientabene}, the definitions of $\alpha(f)$  and $\theta(f)$ in \eqref{orient} are well-posed. {Note also that $\theta(f)$ is the angle between $e_1$ and one of the medians of $f$\,.}
Moreover, by construction, $\alpha(f)\in (-\frac{\pi}{6},\frac{\pi}{6}]$ and hence $\theta(f)\in (\frac{\pi}{3},\frac{2}{3}\pi]$\,.

We set
\begin{equation}\label{deftheta}
\theta_\ep(\mu):=\sum_{f\in F_\ep^\Delta(\mu)}\theta(f)\chi_{f}\,.
\end{equation}

\subsection{Surface energy and Wulff shape}

Let us introduce a Finsler norm $\ffi$ whose unit ball is a unitary hexagon in $\R^2$\,. For every $\eta\in\R^2$ set
\begin{equation}\label{defffi}
\ffi(\eta):=\min\Big\{\sum_{j=1}^{3}|\lambda_j|\,:\,\eta=\sum_{j=1}^3\lambda_j v_j,\,\lambda_j\in\R\Big\}\,,
\end{equation}
where 
\begin{equation}\label{gener}
v_1=e^{i{\frac\pi 6}}\,,\quad v_{2}=e^{i\frac\pi 2}\,,\quad v_3=e^{i{\frac{5}{6}\pi}}\,.
\end{equation}
We define a one-parameter family of Finsler norms $\ffi_{\theta}$\,, for $\theta\in (\frac\pi 3, \frac 2 3\pi]$ by setting
\begin{equation}\label{defffian}
\ffi_\theta(\eta):=\min\Big\{\sum_{j=1}^{3}|\lambda_j|\,:\,\eta=\sum_{j=1}^3\lambda_j v_{j,\theta},\,\lambda_j\in\R\Big\}\,,
\end{equation}
where $v_{j,\theta}=e^{i(\theta-\frac \pi 2)}v_j$ for $j=1,2, 3$\,. Note that $\ffi_{\frac \pi 2}\equiv \ffi$\,.

For every set $G$ of finite perimeter, we set
$$
\Per_{\ffi_\theta}(G):=\int_{\partial^{*}G}\ffi_{\theta}(\nu)\ud \H^1\,,
$$
where $\nu$ denotes the outer normal to $\partial ^*G$ and $\H^1$ denotes the one dimensional Hausdorff measure.

We denote  by $W$  the regular hexagon centered at the origin with area equal to one, defined by
$$
W:= \Big\{ x\in \R^2: |x \cdot v_i| \le 2^{-\frac{1}{2}}  \, 3^{-\frac{1}{4}}, \, i=1,2,3\Big\},
$$

 and set $W_{\theta}:=  e^{i \theta} W$ for all $\theta \in\R$\,.  
These sets are referred to as {\it Wulff shapes}: it is well known \cite{FM} that they are the solutions of the isoperimetric inequality corresponding to the anisotropic perimeters $\ffi_\theta$\,.

%

\subsection{Preliminaries on $SBV$ functions}

We refer to the book \cite{AFP} for the definitions and the main properties of $BV$ and $SBV$ functions, sets of finite perimeter, and Caccioppoli partitions.
Here we list few preliminaries and properties  that will be useful in the following. We begin by recalling some standard notation.

Let $A\subseteq\R^2$ be open. As customary,  $BV(A)$ (resp. $SBV(A)$) denotes the set of functions of bounded variation (resp. special functions of bounded variation) defined on $A$ and taking values in $\R$\,. 
Moreover, $SBV_\loc(A)$ denotes the class of functions belonging to $SBV(A')$ for all open bounded sets $A'\subset\subset A$\,. Given any set $D\subset \R$\,, the classes of functions $BV(A;D)$\,,  $SBV(A;D)$ and $SBV_\loc(A;D)$ are defined in the obvious way. 

We say that a set $\Omega\subset\R^2$ has finite perimeter in $A$ if  $\chi_\Omega\in BV(A)$ and we denote by $\Per(\Omega,A)$ the relative perimeter of $\Omega$ in $A$\,. It is well known that $\Per(\Omega,A)= \H^1(\partial^*\Om\cap A)$\,, where 
$\partial^*$ denotes the reduced boundary. If $A=\R^2$ we simply say that $\Omega$ has finite perimeter and we denote by $\Per(\Omega)$ its perimeter.
Finally, if $\Omega$ is a set of finite perimeter,  a Caccioppoli partition of $\Omega$ is a countable partition $\{\omega_j\}_{j}$ of $\Omega$ into sets of (positive Lebesgue measure and) finite perimeter with $\sum_{j}\Per(\omega_j,\Omega)<\infty$\,. 

We recall that the distributional gradient $\mathrm{D} g$ of a function $g\in SBV(A)$ can be decomposed as:
\[
	\mathrm{D} g=\nabla g\,\mathcal L^2\res A+(g^+ - g^-)\otimes \nu_g \,\mathcal{H}^{1}\res{S_g}\,,
\]
where $\nabla g$ is the approximate gradient of $g$\,, $S_g$ is the jump set of $g$\,, $\nu_g$  is a unit normal to $S_g$ and $g^{\pm}$ are the approximate trace values of $g$ on  $S_g$\,.


We recall a compactness result.
\begin{theorem}[Compactness \cite{A}]\label{SBVComp}
	Let $A$ be bounded and let $\{g_h\}\subset SBV(A)$\,. Assume that there exists   $p > 1$ and $C>0$ such that
	\begin{equation}\label{compactnessCondition}
		\int_{A} |\nabla g_h|^p \ud x + \mathcal{H}^{1}(S_{g_h}) + \left\| g_h	\right\|_{L^\infty (A)}\leq C\quad\textrm{for all }h\in\N\,.
	\end{equation}
	Then, there exists $g \in SBV(A)$ such that, up to a subsequence,
\begin{equation}\label{sbvconv}
\begin{split}
	&g_h \to g \textrm{ (strongly) in } L^1(A)\,,\\
	&\nabla g_h \weakly \nabla g \textrm{ (weakly) in } L^1(A;\R^{2})\,,\\
	&\liminf_{h\to\infty} \mathcal H^1(S_{g_h}\cap A') \geq \mathcal H^1(S_g \cap A')\quad\textrm{for every open set }A'\subseteq A\,.
\end{split}
\end{equation}
\end{theorem}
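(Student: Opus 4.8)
The statement is Ambrosio's compactness and closure theorem for $SBV$, so I would follow his slicing strategy, whose engine is the one-dimensional case. The plan is first to promote the hypotheses to a uniform $BV$ bound. Since $A$ is bounded, H\"older's inequality turns the $L^p$ control of $\nabla g_h$ (with $p>1$) into an $L^1$ control, while the $L^\infty$ bound gives $|g_h^+ - g_h^-|\le 2C$ on $S_{g_h}$, so the jump part satisfies $|D^j g_h|(A)=\int_{S_{g_h}}|g_h^+-g_h^-|\ud\mathcal{H}^1\le 2C\,\mathcal{H}^1(S_{g_h})$, which is bounded. Hence $\sup_h|Dg_h|(A)<\infty$ and, together with $\|g_h\|_{L^1(A)}\le|A|\,\|g_h\|_{L^\infty(A)}$, this shows $\{g_h\}$ is bounded in $BV(A)$. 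By $BV$ compactness, up to a subsequence $g_h\to g$ strongly in $L^1(A)$ with $g\in BV(A)$ and $Dg_h\weakstar Dg$ as measures. Moreover, the exponent $p>1$ yields equi-integrability of $\{\nabla g_h\}$ in $L^1(A;\R^2)$ (de la Vall\'ee Poussin), so by Dunford--Pettis a further subsequence satisfies $\nabla g_h\weakly v$ weakly in $L^1(A;\R^2)$ for some $v$. The remaining task is to identify $v=\nabla g$, to show $g\in SBV(A)$ with no Cantor part, and to prove the surface lower semicontinuity.

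The heart of the argument is the one-dimensional case, which I would establish first. Given an interval $I$ and functions $u_h\in SBV(I)$ with $\int_I|u_h'|^p\le C$, $\#S_{u_h}\le C$ and $\|u_h\|_{L^\infty}\le C$, the bounded number of jumps lets me pass to a subsequence along which $\#S_{u_h}\equiv m$ is constant and the jump points converge to limit points; away from these points $u_h$ is bounded in $W^{1,p}$ and hence, by the compact embedding $W^{1,p}(I)\hookrightarrow C^0(\bar I)$ (using $p>1$), converges uniformly on each limiting subinterval. The limit $u$ is therefore piecewise $W^{1,p}$, so $u\in SBV(I)$ with no Cantor part, $u_h'\weakly u'$ weakly in $L^1$, and since jump points can only merge, or leave $I$, or have their amplitude collapse in the limit, one gets $\#S_u\le\liminf_h\#S_{u_h}$.

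To transfer this to $A\subseteq\R^2$ I would slice along lines. For a direction $\xi\in\mathbb{S}^1$ write $g^\xi_y(t):=g(y+t\xi)$ for $y\in\xi^\perp$. By the structure theory of $BV$ and $SBV$ functions \cite{AFP}, $g\in BV(A)$ is $SBV$ precisely when almost every slice $g^\xi_y$ is one-dimensional $SBV$, and the absolutely continuous, jump, and Cantor parts of $Dg$ restrict to those of the slices via Fubini-type identities; in particular $\langle v,\xi\rangle$ is, slice by slice, the weak limit of $((g_h)^\xi_y)'$, while the jump sets relate through the integral-geometric representation of $\mathcal{H}^1$ on rectifiable sets (Besicovitch--Federer). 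The bounds $\int_A|\nabla g_h|^p$, $\mathcal{H}^1(S_{g_h})$ and $\|g_h\|_{L^\infty}$ pass, for a.e. $\xi$ and a.e. $y$ after a diagonal/Fatou extraction, to the slice quantities $\int|((g_h)^\xi_y)'|^p$, $\#\,S_{(g_h)^\xi_y}$ and $\|(g_h)^\xi_y\|_{L^\infty}$. Applying the one-dimensional lemma slicewise and integrating back over $y$ and $\xi$ shows $g\in SBV(A)$, identifies $v=\nabla g$, and, combining the slicewise lower semicontinuity of jump counts with Fatou's lemma and the integral-geometric formula for $\mathcal{H}^1(S_g\cap A')$, yields $\liminf_h\mathcal{H}^1(S_{g_h}\cap A')\ge\mathcal{H}^1(S_g\cap A')$ for every open $A'\subseteq A$.

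The main obstacle is exactly the exclusion of the Cantor part together with the surface lower semicontinuity: these are the substance of the theorem and do not follow from $BV$ compactness alone. The superlinearity encoded in $p>1$ is essential, since the equi-integrability it provides is what prevents gradient mass from concentrating into a singular (Cantor) part, whereas the bound on $\mathcal{H}^1(S_{g_h})$ controls only the number, not the location, of discontinuities. Consequently the careful slicing bookkeeping, ensuring that the slice bounds hold simultaneously for a.e. line and that jumps neither appear in the limit nor have their length collapse, is where the real work lies.
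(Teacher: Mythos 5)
This theorem is not proved in the paper at all: it is quoted, with the citation [A], as Ambrosio's compactness theorem, so there is no internal proof to compare your argument against. Your proposal reconstructs the classical slicing proof of that cited result, and the architecture is the right one: promotion of \eqref{compactnessCondition} to a uniform $BV$ bound (correct, since for $g_h\in SBV(A)$ one has $|Dg_h|(A)=\int_A|\nabla g_h|\ud x+\int_{S_{g_h}}|g_h^+-g_h^-|\ud\mathcal{H}^1$, so the $L^\infty$ and $\mathcal H^1$ bounds control the jump part); $BV$ compactness plus the $L^\infty$ bound to get strong $L^1(A)$ convergence (strictly, $BV$ compactness on a general bounded open set gives $L^1_{\loc}$, which the uniform $L^\infty$ bound upgrades to $L^1(A)$); Dunford--Pettis for a weak-$L^1$ cluster point $v$ of $\{\nabla g_h\}$; a one-dimensional compactness/lower-semicontinuity lemma, whose statement and proof sketch are correct; the slicing characterization of $SBV$; and the integral-geometric representation of $\mathcal H^1$ on the rectifiable sets $S_{g_h}$, $S_g$.

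The one step that fails as written is the transfer of the bounds to the slices (``the bounds \dots pass, for a.e.\ $\xi$ and a.e.\ $y$ after a diagonal/Fatou extraction, to the slice quantities''). Fubini, together with $\int_{\xi^\perp}\#S_{(g_h)^\xi_y}\ud y=\int_{S_{g_h}}|\nu\cdot\xi|\ud\mathcal{H}^1\le\mathcal{H}^1(S_{g_h})$, only bounds the \emph{$y$-integrals} of the slice quantities uniformly in $h$; for a fixed $y$ the slice energies $F_h(y):=\int|((g_h)^\xi_y)'|^p\ud t+\#S_{(g_h)^\xi_y}$ need not be bounded in $h$, and no diagonal extraction can make them bounded for a.e.\ $y$ along a single subsequence, because there are uncountably many slices. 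The standard repair, which your setup accommodates, is: first extract one subsequence so that $(g_h)^\xi_y\to g^\xi_y$ in $L^1$ for a.e.\ $y$ (possible since $\int_{\xi^\perp}\|(g_h)^\xi_y-g^\xi_y\|_{L^1}\ud y\to 0$); then, by Fatou, $\liminf_h F_h(y)<\infty$ for a.e.\ $y$, and for each such $y$ you apply your one-dimensional lemma along a \emph{$y$-dependent} sub-subsequence realizing this liminf. Its limit is forced to equal $g^\xi_y$ by the slice convergence already secured, so the conclusions --- $g^\xi_y\in SBV$, identification of the weak limit of the slice derivatives, and $F(g^\xi_y)\le\liminf_h F_h(y)$ --- concern the full sequence and require no global extraction over $y$. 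Integrating these pointwise inequalities in $y$ via Fatou (for two independent directions $\xi$ to exclude the Cantor part and identify $v=\nabla g$, and over all $\xi\in\mathbb{S}^1$, localized to each open $A'\subseteq A$, for the jump term) yields exactly \eqref{sbvconv}. With this correction your argument is complete and is essentially the standard proof of the cited theorem.
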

In the following, we say that a sequence $\{g_h\} \subset SBV(A)$ weakly converges  in $SBV(A)$ to a function $g\in SBV(A)$\,, and we write that $g_h\weakly g$ in $SBV(A)$\,,  if $g_h$ satisfy \eqref{compactnessCondition} for some $p>1$ and $g_h\to g $ in $L^1(A)$\,.
The corollary below easily follows by Theorem \ref{SBVComp}.  
\begin{corollary}\label{SBVcompcor}
Let $\{g_h\}\subset SBV(\R^2)$\,. Assume that there exists   $p > 1$ and $C>0$ such that
	\begin{equation}\label{compactnessConditioncor}
		\int_{\R^2} |\nabla g_h|^p \ud x + \mathcal{H}^{1}(S_{g_h}) + \left\| g_h	\right\|_{L^\infty (\R^2)}\leq C\quad\textrm{for all }h\in\N\,.
	\end{equation}
Then, there exists $g \in SBV(\R^2)$ such that, up to a subsequence, \eqref{sbvconv} holds for every open bounded set $A\subset\R^2$\,.	
\end{corollary}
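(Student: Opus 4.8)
The plan is to deduce the corollary from Theorem~\ref{SBVComp} by exhausting $\R^2$ with bounded open sets and diagonalising the subsequences produced on each of them.

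First, I would fix the exhaustion $A_k:=\{x\in\R^2:|x|<k\}$, $k\in\N$, so that $A_k\subset A_{k+1}$ and every bounded open set $A\subset\R^2$ is contained in some $A_k$. Since the bound \eqref{compactnessConditioncor} holds over all of $\R^2$, restricting the integrals to $A_k$ shows that $\{g_h\}$ satisfies hypothesis \eqref{compactnessCondition} of Theorem~\ref{SBVComp} on each $A_k$, with the same $p$ and $C$. I would then extract subsequences inductively: Theorem~\ref{SBVComp} applied on $A_1$ yields a subsequence $\{g^{1}_h\}_h$ and a limit $g^{1}\in SBV(A_1)$ satisfying \eqref{sbvconv}; applying the theorem to $\{g^{1}_h\}_h$ on $A_2$ gives a further subsequence $\{g^{2}_h\}_h$ and $g^{2}\in SBV(A_2)$; iterating produces nested subsequences $\{g^{k}_h\}_h$ with limits $g^{k}\in SBV(A_k)$.

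The second step is the consistency of these limits together with a diagonal extraction. For $k<m$ the sequence $\{g^{m}_h\}_h$ converges to $g^{m}$ in $L^1(A_m)$, hence to $g^{m}|_{A_k}$ in $L^1(A_k)$; being also a subsequence of $\{g^{k}_h\}_h$ it converges to $g^{k}$ in $L^1(A_k)$, so by uniqueness of the $L^1$-limit $g^{m}|_{A_k}=g^{k}$ almost everywhere. Consequently the $g^{k}$ glue to a single $g\colon\R^2\to\R$ with $g|_{A_k}=g^{k}$ for every $k$. Setting $\tilde g_k:=g^{k}_k$ (the $k$-th term of the $k$-th subsequence), the tail $\{\tilde g_k\}_{k\ge m}$ is a subsequence of $\{g^{m}_h\}_h$, so $\tilde g_k$ satisfies the three convergences in \eqref{sbvconv} towards $g$ on every $A_m$. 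Since an arbitrary bounded open $A$ lies in some $A_k$, restricting these convergences (and using the last line of \eqref{sbvconv} with $A'=A$) yields \eqref{sbvconv} on $A$.

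Finally, I would promote the local information to $g\in SBV(\R^2)$ by passing the three bounds to the limit uniformly in $k$. On each $A_k$, lower semicontinuity along the convergence gives $\|g\|_{L^\infty(A_k)}\le C$, $\int_{A_k}|\nabla g|^p\ud x\le\liminf_k\int_{A_k}|\nabla \tilde g_k|^p\ud x\le C$ (weak lower semicontinuity of the $L^p$-norm, since the weak $L^1$-convergence of the gradients together with the $L^p$-bound forces weak $L^p$-convergence), and $\mathcal{H}^1(S_g\cap A_k)\le\liminf_k\mathcal{H}^1(S_{\tilde g_k}\cap A_k)\le C$ from the third line of \eqref{sbvconv}. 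Letting $k\to\infty$ with $A_k\uparrow\R^2$ turns these into the global bounds $\|g\|_{L^\infty(\R^2)}\le C$, $\int_{\R^2}|\nabla g|^p\ud x\le C$, $\mathcal{H}^1(S_g)\le C$; together with $g|_{A_k}\in SBV(A_k)$ for all $k$ (so that the Cantor part of $\mathrm{D}g$ vanishes on each $A_k$, hence on $\R^2$) this gives $g\in SBV(\R^2)$. The only genuinely delicate point is the jump-set estimate, which is merely lower semicontinuous; but the formulation of Theorem~\ref{SBVComp} already controls $\mathcal{H}^1(S_g\cap A')$ on every open $A'$, so no argument beyond the diagonalisation is needed.
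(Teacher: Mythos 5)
Your exhaustion-and-diagonalization scheme is exactly what the paper has in mind: the authors give no actual proof, stating only that the corollary ``easily follows'' from Theorem \ref{SBVComp}, and your first two steps carry this out correctly --- the restriction of \eqref{compactnessConditioncor} to each ball $A_k$, the nested extractions, the identification $g^{m}|_{A_k}=g^{k}$ by uniqueness of $L^1$-limits, the diagonal sequence, and the restriction of the three convergences in \eqref{sbvconv} to an arbitrary bounded open $A\subset A_k$ are all sound.

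The genuine gap is in your final step, and it is worth stating precisely because it cannot be repaired: from $g\in SBV(A_k)$ for all $k$ together with the global bounds $\|g\|_{L^\infty(\R^2)}\le C$, $\int_{\R^2}|\nabla g|^p\ud x\le C$ (with $p>1$) and $\mathcal{H}^1(S_g)\le C$, one cannot conclude $g\in SBV(\R^2)$. Membership in $SBV(\R^2)$ requires $g\in L^1(\R^2)$ and $|\mathrm{D}g|(\R^2)<\infty$, i.e.\ an $L^1$-bound on $\nabla g$ over the whole plane; since $\R^2$ has infinite measure, neither follows from the $L^p$-bound via H\"older. Concretely, take $g(x)=(1+|x|^2)^{-1/2}$ and $g_h:=g\,\eta_h$, where $\eta_h$ is a smooth cutoff equal to $1$ on $B_h$, supported in $B_{2h}$, with $|\nabla \eta_h|\le 2/h$. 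Each $g_h$ is Lipschitz with compact support, hence lies in $SBV(\R^2)$, the bounds \eqref{compactnessConditioncor} hold with a constant independent of $h$ (the cutoff term contributes $O(h^{2-2p})$ to the gradient integral, and $S_{g_h}=\emptyset$), and $g_h=g$ on $B_h$, so every subsequential $L^1_{\loc}$-limit equals $g$; but $\int_{\R^2}|\nabla g|\ud x=\infty$ and $g\notin L^1(\R^2)$, so $g\notin BV(\R^2)$, let alone $SBV(\R^2)$. Thus the implication you assert in the last step is false, and in fact the corollary as literally stated fails; note also that the point you single out as ``the only genuinely delicate one'' (the jump estimate) is harmless, while the real issue is global integrability. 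What your argument does prove is the correct statement: $g\in SBV(A)$ for every bounded open $A$, i.e.\ $g\in SBV_{\loc}(\R^2)$, with \eqref{sbvconv} on every such $A$ --- which is precisely the notion of weak $SBV_{\loc}(\R^2)$ convergence the paper defines immediately afterwards and uses in Theorem \ref{compthm}. (In the paper's applications the functions $\theta_\ep$ vanish outside sets of uniformly bounded perimeter, hence of uniformly bounded measure, and this extra information is what restores global membership of the limit in $SBV(\R^2)$ there.) So either weaken the conclusion to $g\in SBV_{\loc}(\R^2)$, or add a hypothesis such as a uniform bound on $\|g_h\|_{L^1(\R^2)}$ or on the measure of the supports.
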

We say that $\{g_h\} \subset SBV_\loc(\R^2)$ weakly converges  in $SBV_\loc(\R^2)$ to a function $g\in SBV_\loc(\R^2)$\,, and we write that $g_h\weakly g$ in $SBV_\loc(\R^2)$\,, if  $g_h\weakly g$ in $SBV(A)$ for every open bounded set $A$\,.

\section{$\Gamma$-convergence analysis}
In this section we study the asymptotic beaviour, as $\e\to 0$\,, of the functionals  $\E_\ep$ defined in \eqref{defee}. More precisely, we consider the functionals  $\E_\ep(\mu)+{3}\,\mu(\R^2)$ and provide a compactness and a $\Gamma$-convergence result. 

\subsection{Compactness}
\begin{theorem}\label{compthm}
Let $\{\mu_\ep\}\subset\M$ be such that $\E_\ep(\mu_\ep)+{3}\,\mu_\ep(\R^2)\le \frac C \ep$\,. Then,  up to a subsequence, 
\begin{itemize}
\item[(i)] $\ep^2\frac{\sqrt 3}{2}\mu_\ep\weakstar\chi_\Omega \ud x$ for some set $\Omega\subset\R^2$ with $\chi_{\Omega}\in BV(\R^2)$\,;
\item[(ii)] $\theta_\ep(\mu_\ep)\weakly \theta$ in $SBV_\loc(\R^2)$\,, for some $\theta=\sum_{j\in J}\theta_j\chi_{\omega_j}$ in $SBV(\R^2)$\,, where $J\subseteq\N$\,,
 $\{\omega_j\}_{j}$ is a Caccioppoli partition of $\Omega$\,, and $\{\theta_j\}_{j}\subset (\frac\pi 3,\frac 2 3\pi]$\,.
\end{itemize}
\end{theorem}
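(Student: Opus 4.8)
The plan is to deduce (i) from De Giorgi--Ambrosio $BV$-compactness and (ii) from the $SBV$-compactness of Corollary~\ref{SBVcompcor}, both via a single a priori estimate. Write $X_\ep:=\I(\mu_\ep)$, so that the hypothesis reads $E_\ep(X_\ep)+3\sharp X_\ep\le C/\ep$, and set $\Omega_\ep:=\bigcup_{f\in F_\ep(\mu_\ep)}f$ and $\Omega_\ep^\Delta:=\bigcup_{f\in F_\ep^\Delta(\mu_\ep)}f$. The core claim is that the number of \emph{surface edges} is $O(1/\ep)$. Indeed the bond graph is planar, so its Euler characteristic satisfies $\chi(X_\ep)\ge0$ (it is bounded below by the number of outermost connected components), and every non-triangular face has $\Per(f)\ge4\ep$; hence in \eqref{energy2} each of $\ep^{-1}\Per(\Omega_\ep)$, $\sum_{f\in F_\ep^{\neq\Delta}(\mu_\ep)}(\ep^{-1}\Per(f)-3)$, $2\sharp\Wi(X_\ep)$ and $3\chi(X_\ep)$ is nonnegative, and therefore each is $\le E_\ep(X_\ep)+3\sharp X_\ep\le C/\ep$. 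Using $\Per(f)\ge4\ep$ once more this gives $\sharp\Wi(X_\ep)\le C/\ep$, $\ep^{-1}\Per(\Omega_\ep)\le C/\ep$, $\sharp F_\ep^{\neq\Delta}(\mu_\ep)\le C/\ep$ and $\sum_{f\in F_\ep^{\neq\Delta}}\Per(f)\le C$. Since the edges in $\partial^1_\inte$, $\partial^2_\inte$ lie on non-triangular faces and those in $\partial_\ext$ contribute to $\Per(\Omega_\ep)$, all four families $\partial_\ext,\partial^1_\inte,\partial^2_\inte,\Wi$ have $O(1/\ep)$ elements.

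For (i), note that every edge of $\partial\Omega_\ep^\Delta$ has a triangular face on one side and the exterior, a non-triangular face, or a wire edge on the other, so the surface-edge bound yields $\Per(\Omega_\ep^\Delta)\le\ep(\sharp\partial_\ext+\sharp\partial^1_\inte+\sharp\Wi)\le C$. The isoperimetric inequality then gives $|\Omega_\ep^\Delta|\le C\,\Per(\Omega_\ep^\Delta)^2\le C$, so $\{\chi_{\Omega_\ep^\Delta}\}$ is bounded in $BV(\R^2)$ and, along a subsequence, $\chi_{\Omega_\ep^\Delta}\to\chi_\Omega$ in $L^1_\loc(\R^2)$ for some set $\Omega$ of finite perimeter. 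It remains to compare the two measures. Writing $t_i\in\{0,\dots,6\}$ for the number of triangular faces incident to $x_i$, one has $|\Omega_\ep^\Delta|=\tfrac{\sqrt3}4\ep^2\,\sharp F_\ep^\Delta(\mu_\ep)=\tfrac{\sqrt3}{12}\ep^2\sum_i t_i$, while $t_i=6$ for every particle not incident to a surface edge. As the exceptional particles number $O(1/\ep)$ and each carries mass $O(\ep^2)$, testing against $\varphi\in C^1_c(\R^2)$ gives $|\int\varphi\,\ud(\ep^2\tfrac{\sqrt3}2\mu_\ep)-\int_{\Omega_\ep^\Delta}\varphi\,\ud x|=O(\ep)$; letting $\ep\to0$ proves $\ep^2\tfrac{\sqrt3}2\mu_\ep\weakstar\chi_\Omega\,\ud x$.

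For (ii), the decisive observation is that two triangular faces sharing an edge carry the same orientation: the second triangle is the reflection of the first across the common edge, and a reflection permutes the three edge directions of an equilateral triangle modulo $\tfrac\pi3$, leaving $\alpha(\cdot)$ and hence $\theta(\cdot)$ unchanged by \eqref{orientabene}--\eqref{orient}. Thus $\theta_\ep(\mu_\ep)$, being constant on each triangular face and zero elsewhere, can only jump across edges that do not separate two triangles, so $S_{\theta_\ep(\mu_\ep)}\subseteq\partial_\ext\cup\partial^1_\inte\cup\partial^2_\inte\cup\Wi$ and the surface-edge bound gives $\mathcal H^1(S_{\theta_\ep(\mu_\ep)})\le\ep(\sharp\partial_\ext+\sharp\partial^1_\inte+\sharp\partial^2_\inte+\sharp\Wi)\le C$. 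Since moreover $\nabla\theta_\ep(\mu_\ep)=0$ a.e.\ and $\|\theta_\ep(\mu_\ep)\|_{L^\infty}\le\tfrac23\pi$, Corollary~\ref{SBVcompcor} applies (with any $p>1$) and yields $\theta\in SBV(\R^2)$ with $\theta_\ep(\mu_\ep)\weakly\theta$ in $SBV_\loc(\R^2)$. The weak $L^1$ convergence of the gradients forces $\nabla\theta=0$, so $\mathrm D\theta$ is purely of jump type and $\theta$ is piecewise constant, $\theta=\sum_{j\in J}\theta_j\chi_{\omega_j}$ for a Caccioppoli partition $\{\omega_j\}_j$. Finally, passing to the limit in $\tfrac\pi3\chi_{\Omega_\ep^\Delta}\le\theta_\ep(\mu_\ep)\le\tfrac23\pi\,\chi_{\Omega_\ep^\Delta}$ and using the convergences of (i) gives $\tfrac\pi3\chi_\Omega\le\theta\le\tfrac23\pi\,\chi_\Omega$ a.e.; hence $\theta$ vanishes off $\Omega$ and, identifying the endpoint orientations $\tfrac\pi3$ and $\tfrac23\pi$, takes values $\theta_j\in(\tfrac\pi3,\tfrac23\pi]$ on the grains $\omega_j$, which then form a Caccioppoli partition of $\Omega$.

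The two genuinely substantial ingredients are the geometric rigidity just used — that adjacent triangular faces must share their orientation, which is what confines the jump set of $\theta_\ep(\mu_\ep)$ to the $O(1/\ep)$ surface edges — and the combinatorial bookkeeping of the first paragraph, where every type of surface edge is estimated by $C/\ep$ directly from the representation \eqref{energy1}--\eqref{energy2}. A subtler point, but an essential one, is that the measure analysis must be performed on the triangulated region $\Omega_\ep^\Delta$ rather than on the full union of faces $\Omega_\ep$: the two may differ by macroscopic ``holes'' that carry neither particles nor orientation, and only $\Omega_\ep^\Delta$ is compatible both with the empirical measure and with the constraint $\theta_j\in(\tfrac\pi3,\tfrac23\pi]$.
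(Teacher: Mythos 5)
Your proposal follows, in all essentials, the paper's own route. The bounds you extract from \eqref{energy2} in your first paragraph (using $\chi(X_\ep)\ge 0$, $\sharp\Wi\ge 0$ and $\Per(f)\ge 4\ep$ for non-triangular faces) are precisely what the paper uses in Step 1 of Theorem \ref{compthm} and again in Claim 1 of Theorem \ref{gammaconvHR}; your treatment of (ii) --- adjacent triangular faces carry the same orientation, hence $S_{\theta_\ep(\mu_\ep)}$ lies in the surface edges, so Corollary \ref{SBVcompcor} applies and the limit, having vanishing approximate gradient, is constant on the sets of a Caccioppoli partition --- is exactly the argument the paper delegates to the compactness statement (a) of Theorem \ref{teoapp}; and your vertex-counting comparison of $\ep^2\frac{\sqrt 3}{2}\mu_\ep$ with $\chi_{\Omega^\Delta_\ep}\,\mathcal L^2$ is the paper's Step 2, since your measure $\frac{\sqrt3}{12}\ep^2\sum_i t_i\delta_{x_i}$ is $\ep^2\frac{\sqrt3}{2}\hat\mu_\ep$ and your $\chi_{\Omega_\ep^\Delta}$ is $\tilde\mu_\ep$. (The endpoint subtlety that the limit values $\theta_j$ may a priori equal $\frac\pi3$ is glossed over by the paper as well, so your ``identification'' of $\frac\pi3$ with $\frac23\pi$ is not a discrepancy with respect to the paper's proof.)

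There is, however, one concrete hole in your part (i): the assertion that ``$t_i=6$ for every particle not incident to a surface edge'' is false for \emph{isolated} particles, i.e.\ particles lying on no bond at all. Such a particle is incident to no edge whatsoever --- in particular to none of $\partial_{\ext}\cup\partial^1_{\inte}\cup\partial^2_{\inte}\cup\Wi$ --- and yet has $t_i=0$; your walking-around-the-vertex argument needs at least one incident triangle--triangle edge to get started. None of the bounds you establish (on $\Per$ of the union of faces, on $\sharp F^{\neq\Delta}_\ep(\mu_\ep)$, on $\sharp\Wi$, on the surface edges) controls the number of isolated particles, so your error estimate ``$O(1/\ep)$ exceptional particles, each of mass $O(\ep^2)$'' does not cover them, and as written nothing excludes that they carry positive mass in the limit. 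The repair is the device the paper uses for its exceptional set $Y_\ep$ (particles lying on at most five bonds, which includes the isolated ones): since
\begin{equation*}
\E_\ep(\mu_\ep)+3\,\mu_\ep(\R^2)=\tfrac12\sum_i\bigl(6-d_i\bigr)\,,
\end{equation*}
where $d_i$ is the number of bonds at $x_i$, the energy bound yields $\sharp Y_\ep\le 2\bigl(\E_\ep(\mu_\ep)+3\,\mu_\ep(\R^2)\bigr)\le 2C/\ep$, hence $\ep^2\sharp Y_\ep\to 0$. Note that, by your own argument, every particle with $1\le d_i\le 5$ is incident to some surface edge, so the isolated particles are exactly what this identity adds; with this one line (and your exceptional set enlarged to $Y_\ep$), your proof is complete and coincides with the paper's.
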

\begin{proof}
The proof is divided into two steps.

{\it Step 1.}
We first prove that (ii) holds true for some set $\Om$ with finite perimeter. In view of the energy bound and \eqref{energy2} we have
\begin{align*}
C & \ge \ep ( \E_\ep(\mu_\ep)+{3}\,\mu_\ep(\R^2)) \ge 
\Per \Big(\bigcup_{f\in F_\ep(\mu_\ep)}f\Big)+ \frac{1}{4} \sum_{f\in F_\ep^{\neq \Delta}(\mu_\ep)}  \Per(f) 
\\
& \ge \frac{1}{4}  \Per \Big(\bigcup_{f\in F^\Delta_\ep(\mu_\ep)}f\Big) = \frac{1}{4} \Per(\Om_\ep),
\end{align*}
where we have set $\Om_\ep := \bigcup_{f\in F^\Delta_\ep(\mu_\ep)}f$\,. Then, the claim (ii) follows by the compactness statement (a) of Theorem \ref{teoapp}.

{\it Step 2.}
Now we prove (i) with $\Om$ provided in Step 1. To this purpose, for every $f\in F_\ep^{\Delta}(\mu_\ep)$ we denote by $a_j(f)$ ($j=1,2,3$) the vertices of $f$\,,  and we define
\begin{equation}
\hat\mu_\ep:=\frac 1 6\sum_{f\in F_{\ep}^{\Delta}(\mu_\ep)}\sum_{j=1}^3 \delta_{a_j(f)}\,,\qquad\tilde\mu_\ep:=\sum_{f\in F_{\ep}^{\Delta}(\mu_\ep)}\chi_f\,.
\end{equation}
By the energy bound and \eqref{energy2}, 
\begin{equation*}
C\ge\ep(\E_\ep(\mu_\ep)+{3}\,\mu_\ep(\R^2))\ge \Per\Big(\bigcup_{f\in F_\ep(\mu_\ep)}f\Big)
\end{equation*}
so that, by the isoperimetric inequality, we obtain
\begin{equation*}
\tilde\mu_\ep(\R^2)=|\Omega_\ep|\le \Big|\bigcup_{f\in F_\ep(\mu_\ep)}f\Big|\le C\,.
\end{equation*}
By the proof of Step 1 it follows that, up to a subsequence, $\tilde\mu_\ep\to \chi_{\Omega}$ in $L^1(\R^2)$\,. \\
We now show that $\ep^2 \frac{\sqrt{3}}{2} \hat \mu_\ep - \tilde \mu_\e \weakstar 0$\,. Let $\psi\in C^0_c(\R^2)$\,, and let $\psi_f$ be the average of $\psi$ on the triangle $f\in F_\ep^{\Delta}(\mu_\ep)$\,. Then,
\begin{equation}\label{cappmenotil}
\begin{aligned}
\left | \langle \ep^2 \frac{\sqrt{3}}{2} \hat \mu_\ep - \tilde \mu_\e, \psi\rangle \right |&= 
\left | \sum_{f\in F_{\ep}^{\Delta}(\mu_\ep)} \langle \ep^2 \frac{\sqrt{3}}{2} \hat \mu_\ep - \tilde \mu_\e, \psi \res f\rangle\right |
\\
&= \left |  \sum_{f\in F_{\ep}^{\Delta}(\mu_\ep)} \langle \ep^2 \frac{\sqrt{3}}{2} \hat \mu_\ep - \tilde \mu_\e, (\psi - \psi_f) \res f\rangle \right | \\
&\le 
 \sum_{f\in F_{\ep}^{\Delta}(\mu_\ep)} \left| \langle \ep^2 \frac{\sqrt{3}}{2} \hat \mu_\ep - \tilde \mu_\e, (\psi - \psi_f) \res f\rangle \right |\\
 & \le 2 |\tilde \mu_\e|(\R^2) \
 r_\e(\psi) \le C r_\e(\psi) \to 0\,,
\end{aligned}
\end{equation}
where $r_\e(\psi)$ is the modulus of continuity of $\psi$\,. \\
Now we prove that $\ep^2|\mu_\ep-\hat\mu_\ep|(\R^2)\to 0$\,.
We first notice that
$$
\sharp Y_\ep\le 2\ep(\E_\ep(\mu_\ep)+3\mu_\ep(\R^2))\,, 
$$
where $Y_\ep:=\{x\in\supp\mu_\ep\,:\,x\textrm { lies on at most five bonds}\}$\,.
As a consequence, by using the energy bound, we get
\begin{equation}\label{vermenotil}
\begin{aligned}
\ep^2|\mu_\ep-\hat\mu_\ep|(\R^2)=\ep^2 (\mu_\ep-\hat\mu_\ep)(\R^2)\le \ep^2\sharp(\supp\mu_\ep\setminus\supp\hat\mu_\ep)= \ep^2\sharp Y_\ep\le C\ep\to 0\,.
\end{aligned}
\end{equation}
By combining \eqref{cappmenotil} and \eqref{vermenotil} we obtain (i).
\end{proof}

\subsection{$\Gamma$-convergence}
%
%
\begin{theorem}\label{gammaconvHR}
The following $\Gamma$-convergence result holds true.
\begin{itemize}
\item [(i)] ($\Gamma$-liminf inequality) Let $\{\mu_\ep\}\subset\M$ satisfy (i) and (ii) of Theorem \ref{compthm} with $\theta=\bar\theta\chi_{\Omega}$ for some $\bar\theta\in(\frac\pi 3,\frac 2 3 \pi]$\,. Then 
\begin{equation}\label{Gammaliminf}
\liminf_{\ep\to 0}\ep(\E_\ep(\mu_\ep)+{3}\,\mu_\ep(\R^2))\ge \Per_{\ffi_{\bar\theta}}(\Omega)\,.
\end{equation}
\item [(ii)] ($\Gamma$-limsup inequality) For every set $\Omega\subset\R^2$ of finite perimeter and for every $\bar\theta\in(\frac\pi 3,\frac 2 3 \pi]$\,, there exists a sequence $\{\mu_\ep\}\subset\M$
satisfying (i) and (ii) of Theorem \ref{compthm} with $\theta=\bar\theta\chi_{\Omega}$ 
 such that
\begin{equation}\label{Gammalimsup}
\limsup_{\ep\to 0}\ep(\E_\ep(\mu_\ep)+{3}\,\mu_\ep(\R^2))\le\Per_{\ffi_{\bar\theta}}(\Omega)\,.
\end{equation}
\end{itemize}
\end{theorem}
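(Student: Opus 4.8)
The plan is to prove the two inequalities separately, in both cases using the representation formula \eqref{energy2} to convert the energy into the perimeter of the triangulated region $\Omega_\ep:=\bigcup_{f\in F_\ep^\Delta(\mu_\ep)}f$, and extracting the anisotropy from the lattice directions of the boundary edges. For the $\Gamma$-liminf I first note that, multiplying \eqref{energy2} by $\ep$ and discarding the nonnegative term $\sum_{f\in F_\ep^{\neq\Delta}(\mu_\ep)}(\Per(f)-3\ep)$ together with the infinitesimal wire and Euler remainders, one obtains $\ep(\E_\ep(\mu_\ep)+3\mu_\ep(\R^2))\ge\Per(\Omega_\ep)-o(1)$, exactly as in Step~1 of the proof of Theorem~\ref{compthm}. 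The essential point is that $\partial^*\Omega_\ep$ is made of \emph{edges of triangular faces}: each such edge is parallel to one of the three lattice directions $\bar\alpha+k\tfrac\pi3$ (with $\bar\alpha:=\bar\theta-\tfrac\pi2$) attached, through \eqref{orient}, to the orientation of the adjacent triangle.

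I then introduce the nonnegative measures $\lambda_\ep:=\H^1\res\partial^*\Omega_\ep$, which have uniformly bounded mass by the above, and extract a weak-$*$ limit $\lambda_\ep\weakstar\lambda$. Since $\Omega_\ep\to\Omega$ in $L^1$, it suffices to prove the density bound $\frac{\ud\lambda}{\ud(\H^1\res\partial^*\Omega)}(x)\ge\ffi_{\bar\theta}(\nu(x))$ at $\H^1$-a.e.\ $x\in\partial^*\Omega$, with $\nu$ the outer normal; integrating it over $\partial^*\Omega$ yields $\liminf_\ep\ep(\E_\ep(\mu_\ep)+3\mu_\ep(\R^2))\ge\liminf_\ep\lambda_\ep(\R^2)\ge\lambda(\R^2)\ge\Per_{\ffi_{\bar\theta}}(\Omega)$. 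The density bound is obtained by a Fonseca--M\"uller blow-up at a generic $x\in\partial^*\Omega$: after rescaling, $\Omega$ becomes the half-plane $\{y:y\cdot\nu\le 0\}$, while by (ii) of Theorem~\ref{compthm} (the $L^1$-convergence of $\theta_\ep(\mu_\ep)$ together with the bound on $\H^1(S_{\theta_\ep(\mu_\ep)})$) the triangles meeting the blow-up window carry orientation converging to $\bar\theta$ outside a set of negligible length. Hence the rescaled $\partial^*\Omega_\ep$ is a connected polygonal curve separating the two half-planes and built, up to a negligible fraction, from edges in the directions $\bar\alpha+k\tfrac\pi3$. Its minimal Euclidean length per unit displacement in the direction $\tau:=\nu^\perp$ equals the value at $\tau$ of the norm whose unit ball is the regular hexagon with vertices $\pm e^{i(\bar\alpha+k\pi/3)}$; since this hexagon is the one generated by $\{v_{j,\bar\theta}\}$ in \eqref{defffian} rotated by $\tfrac\pi6$, and rotating $\tau$ by $\tfrac\pi2$ gives $\nu$, this minimal length is exactly $\ffi_{\bar\theta}(\nu)$, which proves the density bound.

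For the $\Gamma$-limsup I use that the polygons whose sides are parallel to the lattice directions $\bar\alpha+k\tfrac\pi3$ are dense in $L^1$ with convergence of $\Per_{\ffi_{\bar\theta}}$; for such a polygon the outer normals lie in the directions $v_{j,\bar\theta}$, where $\ffi_{\bar\theta}\equiv1$, so that $\Per_{\ffi_{\bar\theta}}=\Per$. By a diagonal argument it is therefore enough to construct a recovery sequence for one such polygon $\Omega$, which I do by letting $\mu_\ep$ be the sum of the Dirac masses at the points of the triangular lattice $\ep\Lambda_{\bar\theta}$ (the unit-spacing triangular lattice of orientation $\bar\theta$) contained in $\Omega$. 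By construction every triangular face has orientation $\bar\theta$, so $\theta_\ep(\mu_\ep)\to\bar\theta\chi_\Omega$, and the lattice density gives $\ep^2\tfrac{\sqrt3}{2}\mu_\ep\weakstar\chi_\Omega$, i.e.\ (i)--(ii) of Theorem~\ref{compthm} hold with $\theta=\bar\theta\chi_\Omega$. Finally, in \eqref{energy2} the non-triangular-face, wire and Euler contributions are supported near the finitely many corners and along the last layer of cells, hence of order $O(\ep)$, while $\Per(\Omega_\ep)\to\Per(\Omega)=\Per_{\ffi_{\bar\theta}}(\Omega)$, which gives \eqref{Gammalimsup}.

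The main obstacle is the liminf blow-up. One must quantify, using the $SBV_\loc$-convergence of $\theta_\ep(\mu_\ep)$, that in the rescaled window both the non-triangular/defected part of the graph and the triangles whose orientation is far from $\bar\theta$ meet the separating curve on a set of asymptotically negligible length, so that the purely $\bar\theta$-crystalline staircase bound applies to almost all of it; and one must use the planarity and the separation property of the bond graph to ensure that $\partial^*\Omega_\ep$ genuinely disconnects the two sides of the blow-up half-plane, so that a displacement of order one in the direction $\tau$ must be realized by lattice edges. Once this reduction is in place, the staircase inequality and the hexagon duality identifying the minimal edge-length with $\ffi_{\bar\theta}(\nu)$ are elementary.
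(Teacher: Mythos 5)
Your $\Gamma$-limsup argument is correct and is essentially the paper's: a density reduction to polyhedral sets followed by an explicit lattice construction; your restriction to polygons with sides in the lattice directions (where $\Per_{\ffi_{\bar\theta}}=\Per$) versus the paper's general polyhedral sets is an immaterial variant. The problem is in the liminf.

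The liminf argument has a genuine gap, starting from its first inequality. Discarding the nonnegative defect term in \eqref{energy2} yields $\ep(\E_\ep(\mu_\ep)+3\mu_\ep(\R^2))\ge \Per(G_\ep)-o(1)$ where $G_\ep:=\bigcup_{f\in F_\ep(\mu_\ep)}f$ is the union of \emph{all} faces, not $\Per(\Omega_\ep)$ with $\Omega_\ep$ the union of the \emph{triangular} faces; Step 1 of Theorem \ref{compthm} only gives $\ep(\E_\ep(\mu_\ep)+3\mu_\ep(\R^2))\ge\tfrac14\Per(\Omega_\ep)$, and the factor $\tfrac14$ cannot be removed. Hence the chain $\liminf_\ep \ep(\E_\ep(\mu_\ep)+3\mu_\ep(\R^2))\ge\liminf_\ep\lambda_\ep(\R^2)$ with $\lambda_\ep=\H^1\res\partial^*\Omega_\ep$ is unjustified. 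Worse, the claim you use to close the blow-up --- that triangles with orientation far from $\bar\theta$ meet the interface on asymptotically negligible length --- is false. Since two triangular faces sharing an edge necessarily have the same orientation, a misoriented grain is always separated from the bulk by non-triangular faces; so consider a configuration in which the $\bar\theta$-oriented bulk is coated, along all of $\partial\Omega$, by a single layer of triangles of orientation $\bar\theta'\neq\bar\theta$, attached to the bulk across a layer of non-triangular ``grain boundary'' faces. This is compatible with all the hypotheses: the coating has area $O(\ep)$, so $\theta_\ep(\mu_\ep)\to\bar\theta\chi_\Omega$ in $L^1$ and the jump set of $\theta_\ep(\mu_\ep)$ stays of bounded length, while the defect layer carries scaled energy of order $\H^1(\partial\Omega)$, so the energy bound persists. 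But now \emph{every} edge of $\partial G_\ep$ lies in a $\bar\theta'$-lattice direction, so $\H^1\res\partial^*G_\ep$ has limiting density $\ffi_{\bar\theta'}(\nu)$, which can be strictly smaller than $\ffi_{\bar\theta}(\nu)$. Thus with $\lambda_\ep=\H^1\res\partial^*G_\ep$ the energy comparison holds but your density bound fails, whereas with $\lambda_\ep=\H^1\res\partial^*\Omega_\ep$ the density bound happens to hold in this example (because $\partial\Omega_\ep$ counts three interfaces) but the energy comparison fails. Either way, one of the two ingredients of your blow-up breaks.

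What is missing is precisely the compensation mechanism that the paper implements. Misoriented components touching the boundary are either removed when most of their boundary lies on $\partial O_{\ep,\delta}$, which does not increase the perimeter (the paper's Claim 2), or, when they are attached to the bulk mainly through defect faces, their wrongly oriented boundary edges are paid for by the defect energy $\Per(f)-3\ep\ge\tfrac14\Per(f)$ of the adjacent non-triangular faces, via the elementary bound $\tfrac54\ge\tfrac{2}{\sqrt3}=\max_{v\in\mathbb S^1}\ffi_{\bar\theta}(v)$ (the paper's Claim 3, with the bookkeeping of the components $\zeta_k$, $\xi_j$ and the edge classes $\BorGra$, $\NoBorGra$). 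Your blow-up framework could in principle be repaired, but only by inserting this exchange between misoriented/defect boundary length and defect-face energy into the blow-up window, at which point it essentially reproduces the paper's argument in localized form.
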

\begin{proof}
{\it Proof of (i).} We can assume without loss of generality that there exists $C<\infty$ such that
\begin{equation}\label{energybound}
\sup_{\ep>0} \ep(\E_\ep(\mu_\ep)+{3}\,\mu_\ep(\R^2))\le C\,.
\end{equation}

{\it Notation 1.}
For every $\ep>0$ set 
\begin{equation}\label{tutteetri}
G_\ep:=\bigcup_{f\in F_\ep(\mu_\ep)} f\quad\textrm{and}\quad\Omega_\ep:=\bigcup_{f\in F_\ep^{\Delta}(\mu_\ep)}f\,.
\end{equation}

Let $\delta\in (0,\frac 1 4)$\,; we classify the faces in $F_\ep^{\neq\Delta}(\mu_\ep)$ into two subclasses $\Pic$ and $\Gra$ defined by
\begin{equation}\label{perpicgra}
\Pic:=\left\{f\in F_\ep^{\neq\Delta}(\mu_\ep)\,:\,\Per(f)< \frac{\ep}{\delta}\right\}
\textrm{ and }
\Gra:=\left\{f\in F_\ep^{\neq\Delta}(\mu_\ep)\,:\,\Per(f)\ge \frac{\ep}{\delta}\right\}\,.
\end{equation}
Set moreover
\begin{align}\label{Oepdelta}
\Upic:=\bigcup_{f\in \Pic}f\,,\qquad \Ugra:=\bigcup_{f\in \Gra}f\,,\quad\textrm{and}\quad O_{\ep,\delta}:=\Omega_\ep\cup\Upic.
\end{align}
By construction $G_\ep=\Omega_\ep\cup \Upic\cup \Ugra=O_{\ep,\delta}\cup \Ugra$\,, where the unions are all disjoint, so that
\begin{equation}\label{lux}
\Per(O_{\ep,\delta})\le \Per(G_\ep)+\sum_{f\in \Gra} \Per(f)\,.
\end{equation}
{\it Claim 1:} $\ep(\E_\ep(\mu_\ep)+3\mu_\ep(\R^2))\ge \Per(O_{\ep,\delta})+\sum_{f\in \Pic} (\Per(f)-3\ep)+r(\delta)$\,, where $r(\delta)\to 0$ as $\delta\to 0$\,.

Indeed, by \eqref{energybound} and \eqref{energy2}, we have
\begin{equation}\label{faccegrandi}
C\ge \sum_{f\in \Gra}(\Per(f)-3\ep)\ge \ep\Big(\frac {1}{\delta}-3\Big) \sharp\Gra\,.
\end{equation}

Therefore, by \eqref{energy2}, \eqref{lux} and \eqref{faccegrandi}, it follows that
\begin{equation}\label{finefaccegrandi}
\begin{aligned}
\ep(\E_\ep(\mu_\ep)+3\mu_\ep(\R^2))&\ge
\Per(G_\ep)+\sum_{f\in F^{\neq\Delta}_{\ep}(\mu_\ep)} (\Per(f)-3\ep)\\
&\ge \Per(O_{\ep,\delta})+\sum_{f\in \Pic}(\Per(f)-3\ep)-3\ep \sharp \Gra\\
&\ge  \Per(O_{\ep,\delta})+\sum_{f\in \Pic}(\Per(f)-3\ep)-\frac{3C\delta}{1-3\delta}\,,
\end{aligned}
\end{equation}
which proves the claim with $r(\delta):=-\frac{3C\delta}{1-3\delta}$\,.
\vskip5pt
{\it Notation 2.}
Let $\eta\in (0,\frac{\pi}{6})$\,; we set
\begin{equation*}
\giu:=\bigcup_{\newatop{f\in F_{\ep}^{\Delta}(\mu_\ep)}{|\theta(f)-\bar\theta|<\eta}}f\,\qquad\textrm{and}\qquad
\sba:=\bigcup_{\newatop{f\in F_{\ep}^{\Delta}(\mu_\ep)}{|\theta(f)-\bar\theta|\ge\eta}}f\,.
\end{equation*}
Let
\begin{align*}
I_{\ep,\delta}^\eta:=\{f\in\Pic\,:\,\H^1(\partial f\cap\partial\sba)\ge\ep\}\,,\qquad
\tsba:=\sba\cup\bigcup_{f\in I_{\ep,\delta}^\eta}f\,.
\end{align*}
For every connected component $\gamma$ of (the closure of) $\tsba$ we set 
\begin{equation}\label{grandiepiccoli}
\begin{aligned}
\BorGra(\gamma)&:=\{\{x,y\}\in\Ed_\ep(\mu_\ep)\,:\,[x,y]\subset\partial\gamma\cap\partial O_{\ep,\delta}\}\,,\\
\NoBorGra(\gamma)&:=\{\{x,y\}\in\Ed_\ep(\mu_\ep)\,:\,[x,y]\subset\partial\gamma\setminus\partial O_{\ep,\delta}\}\,.
\end{aligned}
\end{equation}
We call $\zeta_1,\ldots,\zeta_{K_{\ep,\delta}^{\eta}}$ the connected components of $\tsba$ such that 
\begin{equation}\label{casouno}
\sharp\BorGra(\zeta_k)\ge\sharp \NoBorGra(\zeta_k)\,,
\end{equation}
and we set
$$
 \hat O_{\ep,\delta}^\eta:= O_{\ep,\delta}\setminus\bigcup_{k=1}^{K_{\ep,\delta}^{\eta}}\zeta_k\,.
$$
Finally, $\xi_1,\ldots,\xi_{J_{\ep,\delta}^{\eta}}$ denote the connected components of $\tsba$ such that
\begin{equation}\label{distinguo}
1\le\sharp\BorGra(\xi_j)<\sharp \NoBorGra(\xi_j)\,.
\end{equation}
Note that, by construction, the sets $\BorGra(\xi_j)$ and $\NoBorGra(\xi_j)$ do not change if in \eqref{grandiepiccoli}
 we replace $O_{\ep,\delta}$ by  $\hat O_{\ep,\delta}^{\eta}$\,.
 \begin{figure}[h]
{\def\svgwidth{200pt}
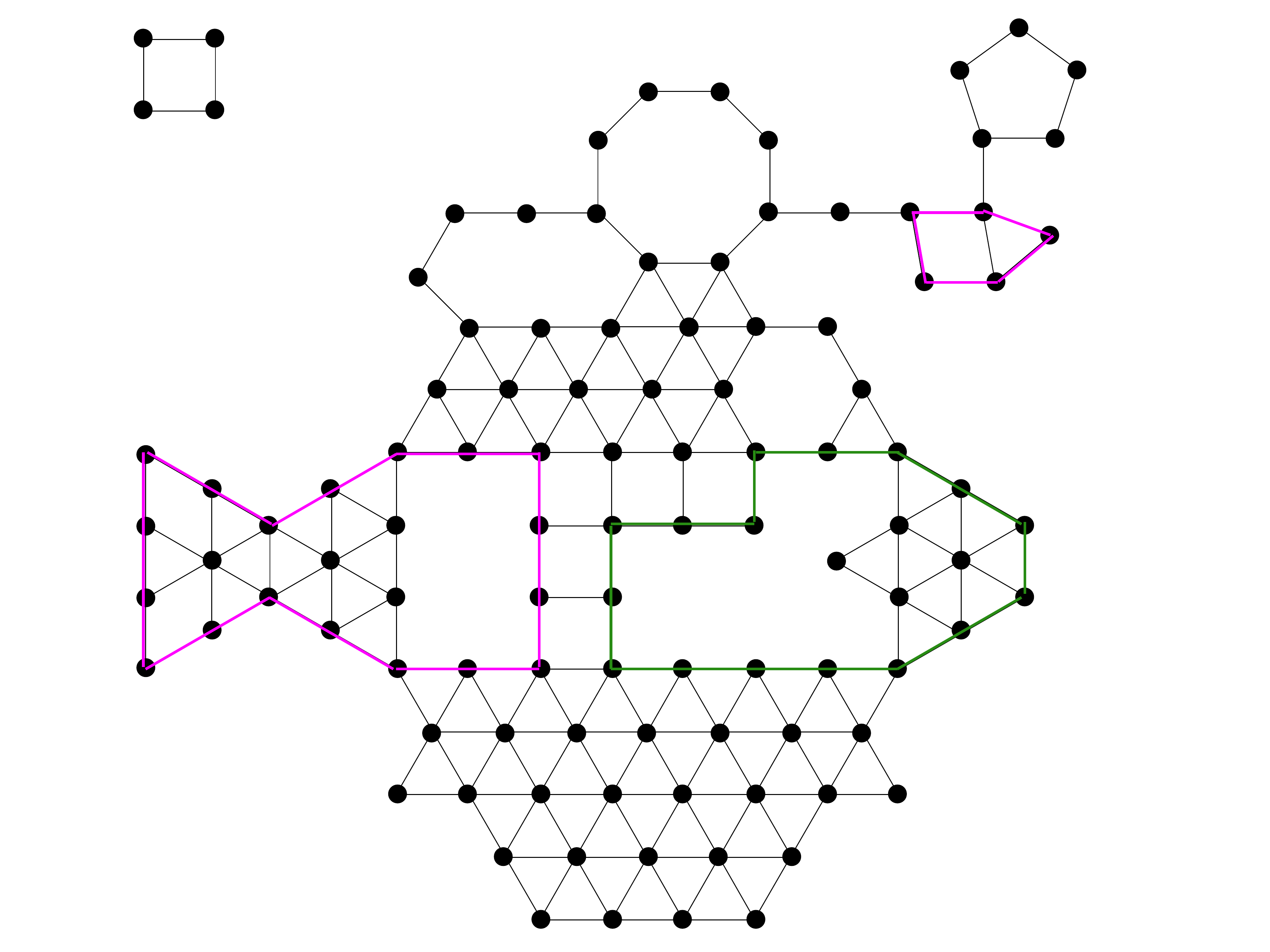}
\caption{The connected components $\zeta_k$ are contoured in pink\,, whereas $\xi_j$ is marked in green.}	\label{liminf fig}
\end{figure}

{\it Claim 2:} $\Per(O_{\ep,\delta})\ge \Per(\hat O_{\ep,\delta}^\eta)$\,.

Indeed, in view of \eqref{casouno},
\begin{align}\label{zeresima}
\Per( O_{\ep,\delta})=\Per( \hat O^{\eta}_{\ep,\delta})+\sum_{k=1}^{K_{\ep,\delta}^{\eta}}(\Per(\zeta_k)-2\ep\sharp\NoBorGra(\zeta_k))\ge \Per( \hat O^{\eta}_{\ep,\delta})\,.
\end{align}
\vskip5pt
{\it Notation 3.} Set
\begin{align*}
\BorBu&:=\{\{x,y\}\in \Ed_\ep(\mu_\ep)\,:\,[x,y]\subset\partial\giu\cap\partial \hat O^{\eta}_{\ep,\delta}\}\,,\\
\BorCa&:=\bigcup_{j=1}^{J_{\ep,\delta}^\eta}\BorGra(\xi_j)\,,\\
\BorMi&:=\{\{x,y\}\in \Ed_\ep(\mu_\ep)\,:\,[x,y]\subset\partial \hat O^{\eta}_{\ep,\delta}\}\setminus(\BorBu\cup\BorCa)\,.
\end{align*}
By construction the set of bonds $\{x,y\}$ with $[x,y]\subset\partial\hat O_{\ep,\delta}^\eta$ is given by the pairwise disjoint union of the bonds in the sets $\BorBu$\,, $\BorCa$\,, and $\BorMi$ so that 
\begin{align}\label{fondam}
\Per(\hat O_{\ep,\delta}):=\ep\sharp\BorBu+\ep\sharp\BorCa+\ep\sharp\BorMi\,.
\end{align}

{\it Claim 3:} $\Per(\hat O_{\ep,\delta}^\eta)+\sum_{f\in\Pic}(\Per(f)-3\ep)\ge (1+r(\eta))\Per_{\ffi_{\bar\theta}}(\hat O_{\ep,\delta}^\eta)$\,, where $r(\eta)\to 0$ as $\eta\to 0$\,.

We preliminarily notice that for every bond $\{x,y\}\in\BorBu$\,,
\begin{equation}\label{prima}
\ffi_{\bar\theta}([x,y])\le \frac{1}{1+r(\eta)}\,,
\end{equation}
for some $r(\eta)\to 0$ as $\eta\to 0$\,. 



Now
we prove that
\begin{equation}\label{terza}
\sum_{j=1}^{J_{\ep,\delta}^{\eta}}\sum_{\newatop{f\in \Pic}{f\subset\xi_j}}(\Per(f)-3\ep)\ge \sum_{j=1}^{J_{\ep,\delta}^{\eta}}\frac{\ep}{4}\sharp\BorGra(\xi_j)=\frac{\ep}{4}\sharp\BorCa\,.
\end{equation}
Indeed, let  $f\in\Pic$ with $f\subset\xi_j$ for some $j=1,\ldots, J_{\ep,\delta}^{\eta}$\,. Then
\begin{equation*}
\Per(f)-3\ep-\frac{\ep}{4}\sharp\{\{x,y\}\in\NoBorGra(\xi_j)\,:\,[x,y]\in\partial f\}\ge \frac{3}{4}\Per(f)-3\ep\ge 0\,.
\end{equation*}
It follows that
\begin{align*}
\Per(f)-3\ep\ge \frac{\ep}{4}\sharp\{\{x,y\}\in\NoBorGra(\xi_j)\,:\,[x,y]\in\partial f\}\,,
\end{align*}
which summing over $f\in \Pic$ with $f\subset\xi_j$\,, implies that
\begin{align}\label{quasiterza}
\sum_{\newatop{f\in \Pic}{f\subset\xi_j}}\Per(f)-3\ep\ge \frac{\ep}{4}\sum_{\newatop{f\in \Pic}{f\subset\xi_j}}\sharp\{\{x,y\}\in\NoBorGra(\xi_j)\,:\,[x,y]\in\partial f\}=\frac{\ep}{4}\sharp\NoBorGra(\xi_j)\,,
\end{align}
where the equality is a consequence of the fact that
if $\{x,y\}\in\NoBorGra(\xi_j)$ and $f\subset\Omega_{\ep}^{\eta+}$\,, then $\{x,y\}$ is not a bond of $f$\,.
By \eqref{quasiterza} and \eqref{distinguo}, and finally summing over $j$\,, we deduce \eqref{terza}.

Now, we consider  bonds $\{x,y\}\in\BorMi$\,, and we notice  that $[x,y]\in\partial f$ for some (unique)
$f\subset \hat O_{\ep,\delta}^\eta\setminus\bigcup_{j=1}^{J_{\ep,\delta}^\eta}\xi_j$ with $f\in\Pic$\,.
We prove that 
\begin{equation}\label{seconda} 
\sum_{\newatop{f\in\Pic}{f\subset\hat O_{\ep,\delta}^\eta\setminus\bigcup_{j=1}^{J_{\ep,\delta}^\eta}\xi_j}}(\Per(f)-3\ep)\ge\frac{\ep}{4}\sharp\BorMi\,.
\end{equation}
Indeed, let $f\in\Pic$ with $f\subset \hat O_{\ep,\delta}^\eta\setminus\bigcup_{j=1}^{J_{\ep,\delta}^\eta}\xi_j$ and let 
$\{x_1,y_1\}\,,\,\ldots,\{x_L,y_L\}\in \BorMi$ be such that $\cup_{l=1}^{L}[x_l,y_l]=\partial f\cap\partial \hat O^{\eta}_{\ep,\delta}$\,. Then, since $\Per(f)\ge 4\ep$\,, we have
\begin{equation}\label{seconda0}
\Per(f)-3\ep-\frac{\ep}{4}L=\frac{3}{4}\Per(f)+\frac{1}{4}(\Per(f)-\ep L)-3\ep\ge 0\,,
\end{equation}
which summing over $f$ implies \eqref{seconda}.

By \eqref{fondam}, \eqref{prima}, \eqref{seconda}, and \eqref{terza}, we can thus conclude
\begin{align*}
\Per(\hat O^{\eta}_{\ep,\delta})+\sum_{f\in\Pic}(\Per(f)-3\ep)&\ge\ep\sharp\BorBu+\ep\sharp\BorCa+\sum_{j=1}^{J_{\ep,\delta}^{\eta}}\sum_{\newatop{f\in \Pic}{f\subset\xi_j}}(\Per(f)-3\ep)\\
&+\ep\sharp\BorMi+
\sum_{\newatop{f\in\Pic}{f\subset\hat O_{\ep,\delta}^\eta\setminus\bigcup_{j=1}^{J_{\ep,\delta}^\eta}\xi_j}}(\Per(f)-3\ep)\\
&\ge (1+r(\eta))\sum_{\{x,y\}\in \BorBu}\ffi_{\bar\theta}([x,y])+\frac{5}{4}\ep\sharp \BorCa+\frac{5}{4}\ep\sharp\BorMi\\
&\ge  (1+r(\eta))\sum_{\{x,y\}\in \BorBu}\ffi_{\bar\theta}([x,y])+\sum_{\{x,y\}\in \BorCa\cup \BorMi}\ffi_{\bar\theta}([x,y])\\
&\ge (1+r(\eta))\Per_{\ffi_{\bar\theta}}(\hat O_{\ep,\delta}^{\eta})\,,
\end{align*}
where the third inequality is a consequence of the fact that $\frac{5}{4}\ge \frac{2}{\sqrt{3}}=\max_{v\in \mathbb S^1}\ffi_{\bar\theta}(v)$\,, being $\mathbb S^1$ the unitary sphere in $\R^2$\,.

\vskip5pt
{\it Claim 4:} $\displaystyle \lim_{\ep\to 0}\|\chi_{\hat O_{\ep,\delta}^\tau}-\chi_{\Omega}\|_{L^1(\R^2)}=0$\,.
By \eqref{energybound} and \eqref{energy2}, we get
\begin{equation}\label{numpic}
C\ge \sum_{f\in\Pic}(\Per(f)-3\ep)\ge\ep\sharp\Pic\,.
\end{equation}
Moreover, by the very definition of $\Pic$ in \eqref{perpicgra} and by the isoperimetric inequality, we have
\begin{equation}\label{areapic}
|f|\le \frac{\ep^2}{4\pi \delta^2}\qquad\textrm{for every }f\in\Pic\,.
\end{equation}
Furthermore, by assumption
$$
\lim_{\ep\to 0}|\Omega_{\ep}^{\eta+}|=0\,,
$$
which, combined together with \eqref{numpic} and \eqref{areapic}, yields
$$
|\Omega_\ep\Delta \hat O_{\ep,\delta}^\eta|\le \sum_{f\in\Pic}|f|+|\Omega_{\ep}^{\eta+}|\le \frac{C\,\ep^2}{4\pi\delta^2}+|\Omega_{\ep}^{\eta+}|\to 0\,,
$$
whence the claim immediately follows.
\vskip5pt
{\it Conclusion: \eqref{Gammaliminf} holds true.}
By Claims 1-3, we have
$$
\ep(\E_\ep(\mu_\ep)+3\mu_\ep(\R^2))\ge (1+r(\eta))\Per_{\ffi_{\bar\theta}}(\hat O_{\ep,\delta}^\eta)+r(\delta)\,,
$$
which, by Claim 4, in view of the lower semicontinuity of the anisotropic perimeter with respect to the strong convergence in $L^1(\R^2)$\,, implies 
$$
\liminf_{\ep\to 0}\ep(\E_\ep(\mu_\ep)+3\mu_\ep(\R^2))\ge (1+r(\eta))\Per_{\ffi_{\bar\theta}}(\Omega)+r(\delta)\,.
$$
Then
\eqref{Gammaliminf} by sending $\eta\to 0$ and $\delta\to 0$\,.
\vskip10pt
{\it Proof of (ii).} 
The $\Gamma$-limsup inequality can be easily obtained as a  consequence of \eqref{Gammalimsupgen} and \eqref{energy2}. For the reader's convenience, we briefly sketch the proof.  
By standard density arguments in $\Gamma$-convergence we can assume that $\Om$ has a finite number $M$ of connected components with polyhedral boundary. Let $X^{\bar \theta}_\e$ be the periodic lattice generated by $\ep e^{i (\bar\theta - \frac{\pi}{2})}$ and by  $\ep e^{i (\bar\theta - \frac{\pi}{6})}$\,. We denote by $F_\ep(X_\ep^{\bar\theta})$ the set of equilateral triangles with vertices in $X^{\bar \theta}_\e$ and side-length equal to $\ep$\,. Set 
$$
X_\e:= \{ x\in f: f\in F_\e(X^{\bar \theta}_\e), \, f \subset \Om \}, \qquad \mu_\e:= \sum_{x\in X_\e} \delta_x, \qquad  \Om_\e:= \bigcup_{f \in F_\e(X^{\bar \theta}_\e): \, f\subset \Om} f\,.
$$ 
Since $F_\ep(X_\ep)=F_\ep^{\Delta}(X_\ep)$\,, by \eqref{energy2}, we immediately have 
$$
\ep(\E_\ep(\mu_\ep)+{3}\,\mu_\ep(\R^2))= \Per (\Om_\e)+3\ep M.  
$$
Moreover one can trivially check that $\Per (\Om_\e) \to \Per_{\ffi_{\bar\theta}}(\Om)$ as $\e\to 0$\,, thus concluding the proof of (ii).
\end{proof}
Finally, in this last part of the section we briefly consider the case of additional confining forcing terms. Let
$$
\F^g_\e(\mu):= \E_\e(\mu) + 3 \mu(\R^2)+ \frac{\sqrt{3} }{2 \e} \int_{\R^2} g  \ud\mu\,,
$$
where $g\in \C^0(\R^2)$ with $g(x)\to +\infty$ as $|x|\to +\infty$\,.

\begin{corollary}
Let $\{\mu_\ep\}\subset\M$ be such that $\F^g_\ep(\mu_\ep) \le \frac C \ep$\,. Then,  up to a subsequence, 
\begin{itemize}
\item[(1)](Compactness for $\mu_\e$) $\ep^2\frac{\sqrt 3}{2}\mu_\ep\weakstar\chi_\Omega \ud x$ for some set $\Omega\subset\R^2$ with $\chi_{\Omega}\in SBV(\R^2)$\,. Moreover, $\ep^2\frac{\sqrt 3}{2} \mu_\ep (\R^2)\to |\Omega|$\,. 
\item[(2)](Compactness for $\theta_\e$) $\theta_\ep(\mu_\ep)\weakly \theta$ in $SBV(\R^2)$\,, for some $\theta=\sum_{j\in J}\theta_j\chi_{\omega_j}$\,, where $J\subseteq\N$\,,
$\{\omega_j\}_j$ is a Caccioppoli partition of $\Omega$\,, and $\{\theta_j\}_j\subset (\frac\pi 3,\frac 2 3\pi]$\,.
	\item [(3)] ($\Gamma$-liminf inequality) If $\theta=\bar\theta\chi_{\Omega}$ for some $\bar\theta\in(\frac\pi 3,\frac 2 3 \pi]$\,, then 
	\begin{equation}\label{Gammaliminf3}
	\liminf_{\ep\to 0}\ep \F^g_\ep(\mu_\ep) \ge \Per_{\ffi_{\bar\theta}}(\Omega) + \int_\Om g \ud x\,.
	\end{equation}
	\item [(4)] ($\Gamma$-limsup inequality) For every set $\Omega\subset\R^2$ of finite perimeter and for every $\bar\theta\in(\frac\pi 3,\frac 2 3 \pi]$\,, there exists a sequence $\{\mu_\ep\}\subset\M$
	satisfying (1) and (2)  with $\theta=\bar\theta\chi_{\Omega}$ 
	such that
\begin{equation}\label{Gammalimsup3}
	\limsup_{\ep\to 0}\ep \F^g_\ep(\mu_\ep)\le\Per_{\ffi_{\bar\theta}}(\Omega) + \int_\Om g \ud x\,.
	\end{equation}
	
\end{itemize}
\end{corollary}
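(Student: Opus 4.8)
The plan is to deduce (1)--(4) from the compactness and $\Gamma$-convergence already established in Theorems \ref{compthm} and \ref{gammaconvHR}, treating the confining term as a continuous perturbation and using the coercivity of $g$ to gain the additional tightness. Throughout I set $\nu_\ep:=\ep^2\frac{\sqrt3}{2}\mu_\ep$, the rescaled empirical measure of Theorem \ref{compthm}(i), so that the forcing contribution to $\ep\F^g_\ep(\mu_\ep)$ is $\int_{\R^2}g\ud\nu_\ep$. Since $g$ is continuous with $g\to+\infty$ at infinity, it is bounded below; up to replacing $g$ by $g-\min g$ (which shifts both sides of (3)--(4) by $(\min g)\,|\Om|$, consistently with the mass convergence proved below) I may assume $g\ge 0$. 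Next I record that the renormalised energy is nonnegative, $\ep(\E_\ep(\mu_\ep)+3\mu_\ep(\R^2))\ge\Per(\bigcup_{f\in F_\ep(\mu_\ep)}f)\ge 0$, exactly as shown in the proof of Theorem \ref{compthm}. Consequently the bound $\ep\F^g_\ep(\mu_\ep)\le C$ splits, via $g\ge 0$, into the two estimates $\ep(\E_\ep(\mu_\ep)+3\mu_\ep(\R^2))\le C$ and $\int_{\R^2}g\ud\nu_\ep\le C$. The first is precisely the hypothesis of Theorem \ref{compthm}, which yields the weak-$*$ convergence $\nu_\ep\weakstar\chi_\Om\ud x$ of (1) and the $SBV_\loc$ convergence of the orientations in (2); moreover $\chi_\Om\in SBV(\R^2)$ automatically, as the indicator of a finite-perimeter set has neither absolutely continuous nor Cantor part.

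The second estimate $\int_{\R^2}g\ud\nu_\ep\le C$ is the new ingredient, and it provides tightness of $\{\nu_\ep\}$: for every $R>0$, using $g\ge 0$,
\begin{equation*}
\nu_\ep(\R^2\setminus B_R)\le\frac{1}{\inf_{|x|\ge R}g}\int_{\R^2\setminus B_R}g\ud\nu_\ep\le\frac{C}{\inf_{|x|\ge R}g}\,,
\end{equation*}
and the last quantity tends to $0$ as $R\to\infty$, uniformly in $\ep$, by coercivity of $g$. Tightness together with $\nu_\ep\weakstar\chi_\Om\ud x$ upgrades the convergence to convergence of total masses, $\nu_\ep(\R^2)\to|\Om|$, which is the ``Moreover'' part of (1). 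The same tightness, combined with the uniform bounds $\|\theta_\ep(\mu_\ep)\|_{L^\infty}\le\frac23\pi$ and $\H^1(S_{\theta_\ep(\mu_\ep)})\le C$ (the latter inherited from the global perimeter bound) and with $\supp\theta_\ep(\mu_\ep)\subset\Om_\ep$, shows that no $L^1$-mass of $\theta_\ep(\mu_\ep)$ escapes to infinity; hence the $SBV_\loc$ convergence of Theorem \ref{compthm}(ii) is in fact global, proving (2).

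For the $\Gamma$-liminf inequality (3), assume $\theta=\bar\theta\chi_\Om$ and split $\ep\F^g_\ep(\mu_\ep)=\ep(\E_\ep(\mu_\ep)+3\mu_\ep(\R^2))+\int_{\R^2}g\ud\nu_\ep$. By superadditivity of $\liminf$ I bound the two terms separately: the energy term is controlled by Theorem \ref{gammaconvHR}(i), giving $\liminf_\ep\ep(\E_\ep(\mu_\ep)+3\mu_\ep(\R^2))\ge\Per_{\ffi_{\bar\theta}}(\Om)$, whose hypotheses (i)--(ii) of Theorem \ref{compthm} with $\theta=\bar\theta\chi_\Om$ are exactly what I have proved. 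For the forcing term, since $g\ge 0$ is continuous (hence lower semicontinuous) and $\nu_\ep\weakstar\chi_\Om\ud x$, the standard weak-$*$ lower semicontinuity of $\nu\mapsto\int g\ud\nu$ on nonnegative measures gives $\liminf_\ep\int_{\R^2}g\ud\nu_\ep\ge\int_{\R^2}g\ud(\chi_\Om\ud x)=\int_\Om g\ud x$. Adding the two bounds yields \eqref{Gammaliminf3}.

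For the $\Gamma$-limsup inequality (4), fix $\Om$ of finite perimeter and $\bar\theta$; if $\int_\Om g\ud x=+\infty$ the inequality is trivial, so I assume it finite. I take the recovery sequence $\{\mu_\ep\}$ built in the proof of Theorem \ref{gammaconvHR}(ii): after the usual density reduction $\Om$ may be taken with polyhedral boundary and finitely many components, hence bounded, and $\mu_\ep$ is the restriction to $\Om$ of the lattice of orientation $\bar\theta$ and spacing $\ep$. For this sequence $\ep(\E_\ep(\mu_\ep)+3\mu_\ep(\R^2))\to\Per_{\ffi_{\bar\theta}}(\Om)$, while the forcing term is a Riemann sum, $\int_{\R^2}g\ud\nu_\ep\to\int_\Om g\ud x$, because $\Om$ is bounded and $g$ is continuous; the sequence also satisfies (1)--(2) with $\theta=\bar\theta\chi_\Om$ by construction. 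Summing gives \eqref{Gammalimsup3}. The only genuinely new point beyond Theorems \ref{compthm} and \ref{gammaconvHR} is the interplay of the confining potential with the measure convergence---namely the coercivity-based tightness promoting weak-$*$ convergence to convergence of masses (and $SBV_\loc$ to $SBV$) together with the matching lower semicontinuity in (3); this is the step I expect to require the most care, whereas all perimeter estimates are inherited verbatim from the unconfined case.
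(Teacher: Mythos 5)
Your proposal takes exactly the paper's route: the paper's own proof is a three-sentence sketch asserting that items (1)--(2) follow from Theorem \ref{compthm} plus the fact that coercivity of $g$ forbids loss of mass at infinity, and that items (3)--(4) follow from Theorem \ref{gammaconvHR} once the forcing term is viewed as a continuous perturbation; you supply precisely the details the paper leaves to the reader (splitting of the energy bound, the tightness estimate, convergence of total masses, the upgrade from $SBV_\loc$ to $SBV$, lower semicontinuity of the forcing term, and the Riemann-sum argument for the recovery sequence), and these details are essentially correct. Note that your identification of the forcing contribution to $\ep\F^g_\ep(\mu_\ep)$ with $\int_{\R^2}g\ud\nu_\ep$, where $\nu_\ep=\ep^2\frac{\sqrt3}{2}\mu_\ep$, tacitly corrects an evident misprint in the paper's definition of $\F^g_\ep$: with the printed coefficient $\frac{\sqrt3}{2\ep}$ that contribution would be $\ep^{-2}\int_{\R^2}g\ud\nu_\ep$ and items (3)--(4) could not hold, so yours is the only reading under which the statement is true.

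The one genuine weak point is the normalization ``I may assume $g\ge 0$''. When $\min g<0$, replacing $g$ by $g-\min g$ turns the hypothesis into a bound on $\ep\F^{g-\min g}_\ep(\mu_\ep)=\ep\F^{g}_\ep(\mu_\ep)+|\min g|\,\nu_\ep(\R^2)$, so transferring the assumed bound $\ep\F^{g}_\ep(\mu_\ep)\le C$ to the shifted functional requires a uniform bound on the masses $\nu_\ep(\R^2)$ --- but you obtain that bound only later, as a consequence of the splitting that the normalization was meant to justify; your parenthetical remark addresses the shift in the conclusions (3)--(4) but not in the hypothesis. The circularity is real but easily repaired without the shift: pick $M$ and $R_0$ with $g\ge -M$ on $\R^2$ and $g\ge 0$ outside $B_{R_0}$; since finite energy forces the particles to be $\ep$-separated, a packing argument gives the a priori local bound $\nu_\ep(B_{R_0})=\ep^2\frac{\sqrt 3}{2}\,\sharp\bigl(\supp\mu_\ep\cap B_{R_0}\bigr)\le C(R_0+1)^2$ for $\ep\le 1$, hence $\int_{\R^2}g\ud\nu_\ep\ge -MC(R_0+1)^2$. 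This yields the uniform bound on $\ep(\E_\ep(\mu_\ep)+3\mu_\ep(\R^2))$ needed for Theorem \ref{compthm}, then the bound $\int_{\R^2}g\ud\nu_\ep\le C'$, and your tightness estimate (run for $R\ge R_0$, subtracting the $B_{R_0}$ contribution, so that $\nu_\ep(\R^2\setminus B_R)\le C''/\inf_{|x|\ge R}g$) and all subsequent steps go through verbatim.
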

\begin{proof}
We briefly sketch the proof, the details are left to the reader. Items (1) and (2) are an easy consequence of Theorem \ref{compthm} and of the fact that, in view of the coercivity assumption $g(x)\to +\infty$ as $|x|\to +\infty$\,, there is no loss of mass at infinity. 
Items (3) and (4) are consequences of Theorem \ref{gammaconvHR}, once noticed that the functionals $\F_\e^g(\mu)$ are nothing but the functionals $\E_\e(\mu) + 3 \mu(\R^2)$  plus the continuous perturbation  $\frac{\sqrt{3}}{2 \e} \int_{\R^2} g\ud x$\,. 
\end{proof}


\section{Asymptotic behaviour of energy minimizers}
In this section, we present some variational problems for which the asymptotic behaviour of minimizers can be easily studied  using Theorems \ref{compthm} and \ref{gammaconvHR}.

\subsection{Energy bounds for polycrystals}
\begin{proposition}\label{pcno}
The following lower and upper bounds hold true.
\begin{itemize}
\item [(i)] (Lower bound) 
For all $\{\mu_\ep\}\subset\M$ satisfying (i) and (ii) of Theorem \ref{compthm},
we have
\begin{equation}\label{Gammaliminfp}
\liminf_{\ep\to 0}\ep(\E_\ep(\mu_\ep)+{3}\,\mu_\ep(\R^2))\ge \H^1(\partial^*\Om)+ \frac 1 2 \H^1( \cup_j \partial^* \omega_j \setminus \partial^*\Om)\,.
\end{equation}

\item [(ii)] (Upper bound) For every set $\Omega\subset\R^2$ of finite perimeter and for every $\theta\in SBV(\Om;(\frac\pi 3,\frac 2 3 \pi])$  there exists a sequence $\{\mu_\ep\}\subset\M$
satisfying (i) and (ii) of Theorem \ref{compthm} 
 such that
\begin{equation}\label{Gammalimsupp}
\limsup_{\ep\to 0}\ep(\E_\ep(\mu_\ep)+{3}\,\mu_\ep(\R^2))\le \sum_j \Per_{\ffi_{\theta_j}}(\omega_j)\,.
\end{equation}
\end{itemize}
\end{proposition}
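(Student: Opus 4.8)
The plan is to prove the two bounds independently, using throughout the representation \eqref{energy2} together with the rigidity that two triangular faces sharing an edge carry the same orientation (the common edge fixes $\alpha_w$ modulo $\frac\pi3$). Consequently $\theta_\ep(\mu_\ep)$ is constant on each edge-connected cluster of triangular faces, and every jump of the limit orientation $\theta$ between two grains must be screened, microscopically, by a seam of non-triangular faces. I write $G_\ep:=\bigcup_{f\in F_\ep(\mu_\ep)}f$ and $\Omega_\ep:=\bigcup_{f\in F_\ep^\Delta(\mu_\ep)}f$ as in \eqref{tutteetri}, and assume $\liminf_\ep\ep(\E_\ep(\mu_\ep)+3\mu_\ep(\R^2))<\infty$, passing to a subsequence with a uniform energy bound $C$.

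\textbf{Lower bound.} I would start, as in \eqref{finefaccegrandi}, from
\[
\ep(\E_\ep(\mu_\ep)+3\mu_\ep(\R^2))\ \ge\ \Per(G_\ep)+\sum_{f\in F_\ep^{\neq\Delta}(\mu_\ep)}(\Per(f)-3\ep)\ \ge\ \Per(G_\ep)+\frac14\sum_{f\in F_\ep^{\neq\Delta}(\mu_\ep)}\Per(f),
\]
where $\Per(f)-3\ep\ge\frac14\Per(f)$ since $\Per(f)\ge4\ep$ for $f\in F_\ep^{\neq\Delta}(\mu_\ep)$. Because $|G_\ep\setminus\Omega_\ep|\to0$ and $\Omega_\ep\to\Omega$ in $L^1$ (as in the proof of Theorem \ref{compthm}), lower semicontinuity of the perimeter gives $\liminf_\ep\Per(G_\ep)\ge\H^1(\partial^*\Omega)$, producing the external term. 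For the interface term I localize on an open set $A$ meeting a single interface $\partial^*\omega_j\cap\partial^*\omega_{j'}$ but disjoint from $\partial^*\Omega$ and from every other grain; fixing $\eta<\tfrac12|\theta_j-\theta_{j'}|$ and setting $\Omega_{j,\ep}:=\bigcup\{f\in F_\ep^\Delta(\mu_\ep):|\theta(f)-\theta_j|<\eta\}$, statement (ii) of Theorem \ref{compthm} yields $\chi_{\Omega_{j,\ep}}\to\chi_{\omega_j}$ and $\chi_{\Omega_{j',\ep}}\to\chi_{\omega_{j'}}$ in $L^1(A)$.

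\textbf{The crux} is the \emph{double-strand} count. Every edge of $\partial^*\Omega_\ep\cap A$ borders a seam face, and the edges bordering $\Omega_{j,\ep}$ are disjoint from those bordering $\Omega_{j',\ep}$, since each such edge has a single triangular neighbour, lying in exactly one grain. Applying lower semicontinuity of the perimeter (the jump-set part of Theorem \ref{SBVComp}) to $\chi_{\Omega_{j,\ep}}$ and to $\chi_{\Omega_{j',\ep}}$ \emph{separately} gives
\[
\liminf_\ep \ep\,\#\{\text{seam-facing edges in }A\}\ \ge\ \H^1(\partial^*\omega_j\cap A)+\H^1(\partial^*\omega_{j'}\cap A)=2\,\H^1\big((\partial^*\omega_j\cap\partial^*\omega_{j'})\cap A\big).
\]
As these edges are counted in $\sum_{f\neq\Delta}\Per(f)$, the factor $\frac14$ converts the factor $2$ into the required $\frac12$. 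Summing over the countably many interfaces on pairwise disjoint neighbourhoods, and adding the external term on a disjoint neighbourhood of $\partial^*\Omega$ — the two contributions being carried by different edges (outer boundary of $G_\ep$ versus seam faces) — superadditivity of $\liminf$ delivers \eqref{Gammaliminfp}. The essential subtlety, which I expect to be the main obstacle, is exactly this factor $2$: one cannot apply semicontinuity to $\theta_\ep$ itself, because in the limit the seam collapses and $\theta$ shows only a single jump, so the doubling is recovered solely by testing each adjacent grain separately; one must also verify that the interface–interface and interface–$\partial^*\Omega$ junctions form an $\H^1$-negligible set that the localization can excise.

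\textbf{Upper bound.} Here I argue by construction. Truncating the countable partition to finitely many grains and invoking the standard density of polyhedral partitions, I reduce to grains $\omega_1,\dots,\omega_M$ with polyhedral boundaries and constant orientations $\theta_1,\dots,\theta_M$, approximating $\sum_j\Per_{\ffi_{\theta_j}}(\omega_j)$ from above. On each $\omega_j$, shrunk by a corridor of width $w_\ep$ with $\ep\ll w_\ep\to0$, I place the single-crystal configuration $\mu_{j,\ep}$ of Theorem \ref{gammaconvHR}(ii), so that $\ep(\E_\ep(\mu_{j,\ep})+3\mu_{j,\ep}(\R^2))\to\Per_{\ffi_{\theta_j}}(\omega_j)$. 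With $\mu_\ep:=\sum_j\mu_{j,\ep}$, the corridors force every inter-grain distance to exceed $\ep$, so $V_\ep$ vanishes on all cross pairs and the energy is \emph{exactly} additive:
\[
\ep\big(\E_\ep(\mu_\ep)+3\mu_\ep(\R^2)\big)=\sum_{j=1}^M\ep\big(\E_\ep(\mu_{j,\ep})+3\mu_{j,\ep}(\R^2)\big)\ \longrightarrow\ \sum_{j=1}^M\Per_{\ffi_{\theta_j}}(\omega_j).
\]
As $w_\ep\to0$ one has $\ep^2\tfrac{\sqrt3}2\mu_\ep\weakstar\chi_\Omega$ and $\theta_\ep(\mu_\ep)\to\sum_j\theta_j\chi_{\omega_j}=\theta$, so (i)–(ii) of Theorem \ref{compthm} hold, and a diagonal argument removes the polyhedral approximation. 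This construction charges each internal interface twice, once from each adjacent grain in its own metric $\ffi_{\theta_j}$, which is precisely why the target is $\sum_j\Per_{\ffi_{\theta_j}}(\omega_j)$; the only genuine work is the simultaneous polyhedral approximation of the partition and the check that shrinking by $w_\ep$ does not lose perimeter in the limit.
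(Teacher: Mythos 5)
Your proof of part (ii), and your ``double-strand'' count for the internal interfaces, essentially coincide with the paper's own arguments (the paper obtains the factor $\tfrac12$ by applying the tessellation inequality \eqref{Gammaliminfgen} to a localized measure and blowing up, which is the same idea as testing each adjacent grain separately; it also needs your observation about seam edges lying on the boundary of the occupied set rather than on a face, where the weight is $1\ge\tfrac12$ --- a case your assertion that ``every seam edge borders a non-triangular face'' overlooks, since the gap between two grains may be open to the unbounded component and then is not a face at all). The genuine gap is in your treatment of the \emph{external} term $\H^1(\partial^*\Om)$. The claim $|G_\ep\setminus\Omega_\ep|\to 0$ is false in general: the proof of Theorem \ref{compthm} only yields a bound on the total area and the convergence $\Omega_\ep\to\Om$, while a single non-triangular face of perimeter of order one has area of order one. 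Hence $G_\ep$ may converge to a set $G$ strictly containing $\Om$, lower semicontinuity only gives $\liminf_\ep\Per(G_\ep)\ge\Per(G)$, and for non-convex $\Om$ one can have $\Per(G)<\H^1(\partial^*\Om)$; moreover, after the reduction $\Per(f)-3\ep\ge\tfrac14\Per(f)$, the stretches of $\partial^*\Om$ adjacent to a large void face are charged only with weight $\tfrac14$. Concrete failure: let $\Om$ be a thin serpentine set inside the unit disc with $\H^1(\partial^*\Om)=100$, let $\mu_\ep$ triangulate it, and add a one-particle-thick loop along a circle of radius $2$ joined to the crystal by a single chain of particles. By the paper's conventions the region between loop and crystal is one huge face (the chain produces inner wire edges), so $G_\ep$ converges to the disc of radius $2$, and your lower bound is approximately $4\pi+\tfrac14\,(4\pi+100+2\ell)$, with $\ell$ the chain length, i.e. about $41$, while the target is $100$. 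The true energy does satisfy \eqref{Gammaliminfp}, so it is your estimate, not the statement, that breaks.

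What is missing is precisely the paper's $\delta$-splitting (Claim 1 in the proof of Theorem \ref{gammaconvHR}): split $F_\ep^{\neq\Delta}(\mu_\ep)$ into the small faces $\Pic$ (perimeter $<\ep/\delta$), which keep the weight $\tfrac14$ and whose total area vanishes, and the large faces $\Gra$, which are \emph{not} incorporated into the occupied set, i.e. one works with $O_{\ep,\delta}=\Omega_\ep\cup\Upic$ instead of $G_\ep$. For the large faces one keeps the full term $\Per(f)-3\ep$, which reconstructs $\Per(O_{\ep,\delta})$ at full weight up to the error $3\ep\,\sharp\Gra\le r(\delta)$. Then $O_{\ep,\delta}\to\Om$ in $L^1$ (this is where the smallness of the faces in $\Pic$ is genuinely used), all of $\partial^*\Om$ is charged with weight $1$ by $\liminf_\ep\Per(O_{\ep,\delta},\cdot)$, and the interface term is recovered from $\tfrac14\H^1(\partial\Omega_\ep\setminus\partial O_{\ep,\delta})$ by your two-strand argument, exactly as in the paper's proof via the measure $\bar\mu_\ep$ and blow-up. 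With this replacement your proof of (i) closes; your proof of (ii) needs no change.
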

\begin{proof}
We start by proving (i). 
Let $\delta\in (0,\frac{1}{4})$ and let $O_{\ep,\delta}$ and $\Omega_\ep$ be defined as in \eqref{Oepdelta} and \eqref{tutteetri}. By Claim 1 in the proof of Theorem \ref{gammaconvHR} and by using that $\Per(f)\ge 4\ep$ for every $f\in F_\ep^{\neq\Delta}(\mu_\ep)$\,, we have
$$
\ep(\E_\ep(\mu_\ep)+{3}\,\mu_\ep(\R^2))\ge\Per (O_{\ep,\delta})+\frac 1 4 \H^1(\partial\Omega_\ep\setminus\partial O_{\ep,\delta})+r(\delta)\,,
$$
where $r(\delta)\to 0$ as $\delta\to 0$\,.
By arguing as in Claim 4 in the proof of Theorem \ref{gammaconvHR} one can easily show that
\begin{equation}\label{convl1oep}
\|\chi_{O_{\ep,\delta}}-\chi_{\Omega}\|_{L^1}\to 0\qquad\textrm{as }\ep\to 0\,.
\end{equation}
Let $\bar\mu_\ep$ be the Radon measure defined by 
$$
\bar\mu_\ep(A):=\Per (O_{\ep,\delta}, A)+\frac 1 4 \H^1((\partial\Omega_\ep\setminus\partial O_{\ep,\delta})\cap A)\qquad\textrm{for every open set }A\subset\R^2\,.
$$
By \eqref{convl1oep} and by the lower semicontinuity of the relative perimeter, we have
\begin{equation}\label{borext}
\liminf_{\ep\to 0}\bar\mu_\ep(B_r(x))\ge \Per(\partial^*\Omega,B_r(x))\qquad \textrm{for every }x\in\partial^*\Omega \textrm{ and for every }r>0\,.
\end{equation}
Let now $x\in (\partial^*\omega_j\cap \partial^*\omega_k)\setminus\partial^*\Omega$ for some $j,k\in\N$ with $j\neq k$ and let $r>0$\,.
By \eqref{Gammaliminfgen} we have
\begin{equation}\label{borint}
\liminf_{\ep\to 0}\bar\mu_\ep(B_r(x))\ge \frac 1 4 \Per(\partial^*\omega_j,B_r(x))+ \frac 1 4 \Per(\partial^*\omega_k,B_r(x)) \,.
\end{equation}
By \eqref{borext} and \eqref{borint} together with standard blow up arguments  \eqref{Gammaliminfp} follows.

{Finally, we briefly sketch the proof of (ii). Again by standard density arguments in $\Gamma$-convergence, we can assume that the $\omega_j$'s are in a finite number $M$\,, have pairwise disjoint closures and have polyhedral boundaries. Then, denoting  by $\mu_\ep^j$ the measure constructed in (ii) of Theorem \ref{gammaconvHR} for $\Omega=\omega_j$\,, it is easy to check that $\mu_\ep:=\sum_{j}\mu_\ep^j$ satisfies \eqref{Gammalimsupp}.}
\end{proof}

\subsection{Single crystals versus polycrystals}\label{svp}

\begin{corollary}\label{corosc}
Let $\bar \theta \in (\frac\pi 3,\frac 2 3 \pi]$\,, and let  $\Om$ be a subset of $\R^2$ with finite perimeter such  that $\nu(x)\in \{ v_{k,\bar \theta} \}_{k=1,2,3}$ for $\H^1$-a.e. $x\in \partial^*\Om$\,, 
where $v_{k,\bar \theta} = e^{i(\bar \theta-\frac \pi 2)} v_k$\,, with $v_k$ defined in \eqref{gener}. 

Let $\e_n\to 0$ and $\{\mu_{\e_n}\} \subset \A$ be such that
\begin{equation}\label{c1i}
\inf_{\e^2 \frac{\sqrt{3}}{2} \lambda_\e \weakstar \chi_\Om} \liminf_{\e\to 0} \e( \E_\e (\lambda_{\e}) + 3  \lambda_{\e}(\R^2))=
\lim_{n\to \infty} \e_n( \E_{\e_n} (\mu_{\e_n}) + 3  \mu_{\e_n}(\R^2)) \, .
\end{equation}

Then, up to a subsequence,  $\theta_{\ep_n}(\mu_{\ep_n}) \weakly \bar \theta \chi_\Om $ in $SBV_\loc(\R^2)$\,, where $\theta_{\ep_n}(\mu_{\ep_n})$ is defined according with \eqref{deftheta}. 
\end{corollary}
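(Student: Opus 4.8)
The plan is to first pin down the exact value of the infimum appearing on the left-hand side of \eqref{c1i}, then extract a limiting orientation by compactness, and finally exploit the sharpness of $\Per_{\ffi_{\bar\theta}}$ at the orientation $\bar\theta$ to exclude every other orientation. The starting observation is that the generators $v_{k,\bar\theta}$ are exactly the vertex directions of the unit ball of $\ffi_{\bar\theta}$, so that $\ffi_{\bar\theta}(v_{k,\bar\theta})=1$; more generally $\ffi_{\theta}(v)\ge 1$ for every $v\in\mathbb S^1$, with equality precisely on the vertex directions $\pm v_{k,\theta}$. Since by hypothesis $\nu(x)\in\{v_{k,\bar\theta}\}$ for $\H^1$-a.e.\ $x\in\partial^*\Omega$, this gives $\Per_{\ffi_{\bar\theta}}(\Omega)=\int_{\partial^*\Omega}\ffi_{\bar\theta}(\nu)\ud\H^1=\H^1(\partial^*\Omega)$. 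Combining the $\Gamma$-limsup of Theorem \ref{gammaconvHR}(ii), which exhibits an admissible competitor of cost $\le\Per_{\ffi_{\bar\theta}}(\Omega)$, with the general lower bound of Proposition \ref{pcno}(i), which yields $\liminf\ge\H^1(\partial^*\Omega)$ for every admissible sequence (after passing to a subsequence along which the orientation converges), I would conclude that the infimum in \eqref{c1i} equals $\H^1(\partial^*\Omega)=\Per_{\ffi_{\bar\theta}}(\Omega)$; hence the minimizing sequence satisfies $\lim_{n}\e_n(\E_{\e_n}(\mu_{\e_n})+3\mu_{\e_n}(\R^2))=\H^1(\partial^*\Omega)$.

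Since this limit is finite, the (rescaled) energies of $\{\mu_{\e_n}\}$ are uniformly bounded, so Theorem \ref{compthm} applies and gives, up to a subsequence, $\theta_{\e_n}(\mu_{\e_n})\weakly\theta=\sum_{j\in J}\theta_j\chi_{\omega_j}$ with $\{\omega_j\}_j$ a Caccioppoli partition of $\Omega$ and $\theta_j\in(\tfrac\pi3,\tfrac23\pi]$. I would then apply Proposition \ref{pcno}(i) to this very sequence: because its energy converges exactly to $\H^1(\partial^*\Omega)$, the inequality \eqref{Gammaliminfp} forces $\tfrac12\H^1\big(\cup_j\partial^*\omega_j\setminus\partial^*\Omega\big)=0$, i.e.\ the grain boundaries carry no $\H^1$-mass. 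Consequently $\partial^*\omega_j\subseteq\partial^*\Omega$ up to $\H^1$-null sets for every $j$, the reduced boundaries of the grains tile $\partial^*\Omega$, and $\sum_j\H^1(\partial^*\omega_j)=\H^1(\partial^*\Omega)$; by indecomposability each connected component of $\Omega$ then carries a single orientation.

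The crux is a grain-wise lower bound $\liminf_n\e_n(\E_{\e_n}(\mu_{\e_n})+3\mu_{\e_n}(\R^2))\ge\sum_j\Per_{\ffi_{\theta_j}}(\omega_j)$. I would derive it by localizing the $\Gamma$-liminf of Theorem \ref{gammaconvHR}(i): that proof is boundary-local, since the key bond estimate \eqref{prima} only uses that $\theta(f)\approx\bar\theta$ near the relevant bonds. Replacing $\bar\theta$ by $\theta_j$ on the region where the orientation concentrates near the constant $\theta_j$, and using that the transition/interface regions are $\H^1$-negligible (previous paragraph), a blow-up at $\H^1$-a.e.\ $x\in\partial^*\omega_j$ produces surface density $\ge\ffi_{\theta_j}(\nu(x))$, whence the bound after integration and summation. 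Since $\nu\in\{v_{k,\bar\theta}\}$ on $\partial^*\omega_j$ and $\ffi_{\theta_j}(v_{k,\bar\theta})\ge1$ with equality iff $\theta_j\equiv\bar\theta\pmod{\pi/3}$, which inside $(\tfrac\pi3,\tfrac23\pi]$ means $\theta_j=\bar\theta$, we get $\Per_{\ffi_{\theta_j}}(\omega_j)\ge\H^1(\partial^*\omega_j)$, with equality iff $\theta_j=\bar\theta$. Chaining $\H^1(\partial^*\Omega)=\lim_n\e_n(\cdots)\ge\sum_j\Per_{\ffi_{\theta_j}}(\omega_j)\ge\sum_j\H^1(\partial^*\omega_j)=\H^1(\partial^*\Omega)$ forces equality in every link, so (every grain having positive perimeter by the isoperimetric inequality) $\theta_j=\bar\theta$ for all $j$. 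Therefore $\theta=\bar\theta\sum_j\chi_{\omega_j}=\bar\theta\chi_\Omega$, which is the assertion.

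The main obstacle is exactly the localized lower bound of the previous paragraph: Theorem \ref{gammaconvHR}(i) is stated only for a globally constant orientation, and the paper deliberately leaves the sharp polycrystalline liminf unproved. What makes the localization legitimate here is precisely the vanishing of the grain-boundary measure established above, which decouples the grains and removes the only genuinely new contribution — grain-boundary energy — present in a general polycrystal, so that one merely reuses the single-crystal estimate on each grain separately. Carefully checking that the blow-up at boundary points yields the density $\ffi_{\theta_j}(\nu)$, i.e.\ re-running locally the bond counting behind \eqref{prima}–\eqref{seconda}, is the technical heart of the argument.
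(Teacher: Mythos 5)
Your proposal tracks the paper's own proof in its first three moves and in its endgame: (a) the recovery sequence of Theorem \ref{gammaconvHR}(ii), together with the identity $\Per_{\ffi_{\bar\theta}}(\Om)=\H^1(\partial^*\Om)$ (valid since $\ffi_{\bar\theta}(v_{k,\bar\theta})=1$), bounds the infimum in \eqref{c1i} by $\Per_{\ffi_{\bar\theta}}(\Om)$ and gives the uniform energy bound; (b) Theorem \ref{compthm} yields $\theta_{\ep_n}(\mu_{\ep_n})\weakly\theta=\sum_{j}\theta_j\chi_{\omega_j}$ along a subsequence; (c) the lower bound \eqref{Gammaliminfp} of Proposition \ref{pcno}(i) then forces $\H^1\bigl(\cup_j\partial^*\omega_j\setminus\partial^*\Om\bigr)=0$; and your rigidity observation ($\ffi_{\theta'}\ge 1$ on $\mathbb S^1$ with equality only at vertex directions, hence $\ffi_{\theta'}(v_{k,\bar\theta})=1$ iff $\theta'=\bar\theta$ within $(\frac\pi3,\frac23\pi]$) is exactly the paper's closing argument.

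The gap is the bridging step. You rest the conclusion on a grain-wise anisotropic lower bound, $\liminf_n\e_n(\E_{\e_n}(\mu_{\e_n})+3\mu_{\e_n}(\R^2))\ge\sum_j\Per_{\ffi_{\theta_j}}(\omega_j)$, to be obtained by ``localizing'' the proof of Theorem \ref{gammaconvHR}(i) and blowing up at points of $\partial^*\omega_j$. This is precisely the sharp polycrystalline lower bound that the paper states it does \emph{not} provide; nothing in the paper supplies it, and your sketch does not either. The proof of Theorem \ref{gammaconvHR}(i) is a global surgery (the modified sets $\hat O^{\eta}_{\ep,\delta}$, the removal of the components $\zeta_k$, the compensation of bonds adjacent to misoriented regions and small faces by small-face perimeter excess), and restricting it to balls produces boundary terms on $\partial B_r$ — faces and bonds cut by the ball — that you never estimate. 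Tellingly, the paper's own polycrystalline bound \eqref{Gammaliminfp} achieves only the isotropic weight $1$, not $\ffi_{\theta_j}(\nu)$, on $\partial^*\Om$, exactly because its blow-up there uses plain lower semicontinuity of the perimeter rather than the anisotropic bond-counting machinery. The paper's actual conclusion is far shorter and avoids localization entirely: from $\H^1(\cup_j\partial^*\omega_j\setminus\partial^*\Om)=0$ it deduces that $\theta$ is a single constant, $\theta=\hat\theta\chi_\Om$, so that Theorem \ref{gammaconvHR}(i) applies \emph{verbatim} and gives $\Per_{\ffi_{\hat\theta}}(\Om)\le\Per_{\ffi_{\bar\theta}}(\Om)$; since also $\Per_{\ffi_{\hat\theta}}(\Om)\ge\H^1(\partial^*\Om)=\Per_{\ffi_{\bar\theta}}(\Om)$, the two perimeters coincide and rigidity yields $\hat\theta=\bar\theta$. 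Your scruple that vanishing grain boundaries only give constancy on each indecomposable component of $\Om$ (so that the single-crystal theorem would not apply directly if $\Om$ is decomposable) is a fair reading of a point the paper passes over quickly, but the remedy must then be either to justify the paper's global-constancy deduction or to prove your localized bound in full; as written, your argument hinges on an unproven claim at what you yourself call its technical heart.
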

\begin{figure}[h!]
	\centering
	\def\svgwidth{0.5\textwidth}
	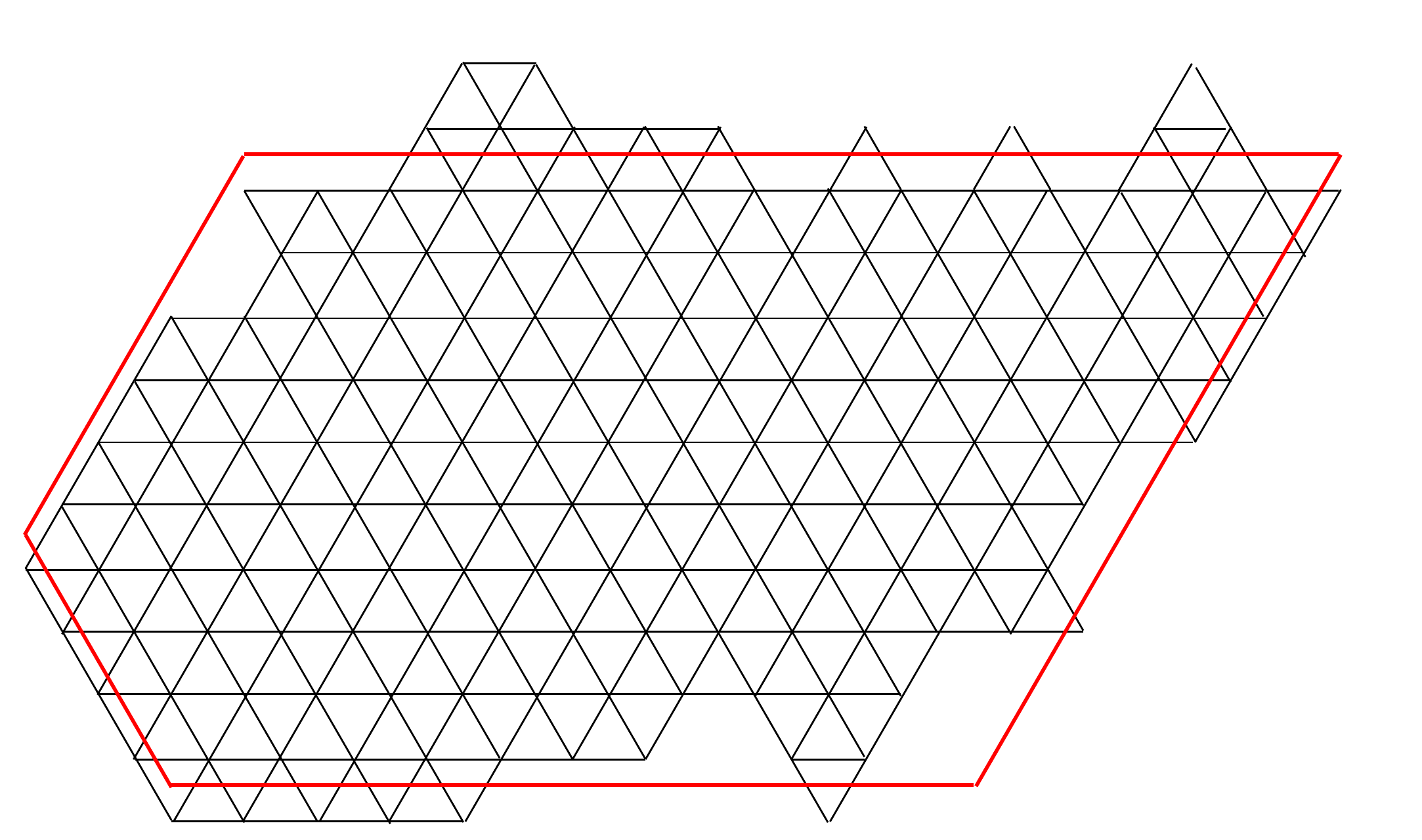
	\caption{Single crystal.}	\label{sincry}
\end{figure}
\begin{proof}
By \eqref{Gammalimsup} it easily follows that $\e_n( \E_{\e_n} (\mu_{\e_n}) + 3  \mu_{\e_n}(\R^2)) \le C$\,. By Theorem \ref{compthm}, we have that, 
up to a subsequence,  $\theta_{\ep_n}(\mu_{\ep_n}) \weakly  \theta  $ in $SBV_\loc(\R^2)$ for some $\theta\in SBV(\Om)$ with $\theta=\sum_{j\in J}\theta_j\chi_{\omega_j}$\,, where $J\subseteq\N$\,,
$\{\omega_j\}_j$ is a Caccioppoli partition of $\Omega$\,, and $\{\theta_j\}_j\subset (\frac\pi 3,\frac 2 3\pi]$\,.
 By \eqref{Gammaliminfp},\eqref{c1i} and  \eqref{Gammalimsup} we have
\begin{multline*}
\Per_{\ffi_{\bar \theta}}(\Om) + \frac 1 2 \H^1( \cup_j \partial^* \omega_j \setminus \partial^*\Om) =
\H^1(\partial^*\Om)+ \frac 1 2 \H^1( \cup_j \partial^* \omega_j \setminus \partial^*\Om)
 \\
\le \liminf_{n\to\infty}\ep_n(\E_{\ep_n}(\mu_{\ep_n})+{3}\,\mu_{\ep_n}(\R^2))
=\inf_{\e^2 \frac{\sqrt{3}}{2} \lambda_\e \weakstar \chi_\Om} \liminf_{\e\to 0} \e( \E_\e (\lambda_{\e}) + 3  \lambda_{\e}(\R^2)) 
\le \Per_{\ffi_{\bar \theta}}(\Om)\,.
\end{multline*}
We deduce that $ \frac 1 2 \H^1(\{ \cup_j \partial^* \omega_j \setminus \partial^*\Om\}) =0$\,, and hence $\theta=\hat \theta\chi_{\Omega}$ for some $\hat \theta \in (\frac{\pi}{3},\frac{2}{3}\pi]$\,. 
By  \eqref{Gammaliminf} and \eqref{Gammalimsup} we deduce  that
\begin{multline*}
\Per_{\ffi_{\bar \theta}}(\Om) \le \Per_{\ffi_{\hat \theta}}(\Om) \le \liminf_{n\to\infty}\ep_n(\E_{\ep_n}(\mu_{\ep_n})+{3}\,\mu_{\ep_n}(\R^2))
\\
=\inf_{\e^2 \frac{\sqrt{3}}{2} \lambda_\e \weakstar \chi_\Om} \liminf_{\e\to 0} \e( \E_\e (\lambda_{\e}) + 3  \lambda_{\e}(\R^2)) 
{\le \Per_{\ffi_{\bar \theta}}(\Om)}\,.
\end{multline*}
We conclude that $\Per_{\ffi_{\bar \theta}}(\Om) = \Per_{\ffi_{\hat \theta}}(\Om)$\,, which implies $\hat \theta = \bar \theta$\,.
\end{proof}
\begin{remark}
Using the minimality property of the measures $\mu_{\e_n}$\,, one can prove that the compactness of the sequence $\{\mu_{\e_n}\}$ stated  in Corollary \ref{corosc} in fact holds true in $SBV(\R^2)$\,. 
\end{remark}
\begin{corollary}
Let $\vth_1\,,\,\vth_2\in (\frac\pi 3,\frac 2 3 \pi]$ with $\vth_1\neq \vth_2$ and, given $\tau\in\R^2$\,, set 
\begin{equation}\label{defm}
\Om_\tau:= W_{\vth_1} \cup (W_{\vth_2} + \tau)\,, \qquad m(\tau):= | W_{\vth_1} \cap (W_{\vth_2} + \tau)|\,.
\end{equation}
Then, there exists $\bar m = \bar m (\vth_1, \vth_2)$ such that, whenever $m(\tau) \le \bar m$ the following holds:

Let $\e_n\to 0$ and $\{\mu_{\e_n}\} \subset \A$ be such that
$$
\inf_{\e^2 \frac{\sqrt{3}}{2} \lambda_\e \weakstar \chi_{\Om_\tau}} \liminf_{\e\to 0} \e( \E_\e (\lambda_{\e}) + 3  \lambda_{\e}(\R^2))=
\lim_{n\to \infty} \e_n( \E_{\e_n} (\mu_{\e_n}) + 3  \mu_{\e_n}(\R^2)) \, .
$$ 

Then, up to a subsequence,  $\theta_{\ep_n}(\mu_{\ep_n}) \weakly \theta$ in $SBV_\loc(\R^2)$\,, for some $\theta=\sum_{j\in J}\theta_j\chi_{\omega_j}$ in $SBV(\R^2)$\,, where $J\subseteq\N$\,,
$\{\omega_j\}_j$ is a Caccioppoli partition of $\Omega_\tau$\,, and $\{\theta_j\}_j\subset (\frac\pi 3,\frac 2 3\pi]$\,. Moreover, the function $\theta$ is not constant, i.e., $\sharp J\ge 2$\,. 
\end{corollary}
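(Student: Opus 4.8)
The plan is to argue by contradiction, pitting the best single crystal against the natural two--grain polycrystal and showing that, for sufficiently small overlap, the polycrystal is strictly cheaper; by the $\Gamma$--convergence analysis this is incompatible with a constant limiting orientation.

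Denote by $V$ the common value appearing in the hypothesis, i.e. $V=\lim_n\e_n(\E_{\e_n}(\mu_{\e_n})+3\mu_{\e_n}(\R^2))$, which equals the infimum over sequences $\lambda_\e$ with $\e^2\frac{\sqrt3}{2}\lambda_\e\weakstar\chi_{\Om_\tau}$. Suppose, contrary to the claim, that the limit is a single crystal, $\theta=\hat\theta\chi_{\Om_\tau}$. Then the $\Gamma$--liminf inequality \eqref{Gammaliminf} gives $V\ge\Per_{\ffi_{\hat\theta}}(\Om_\tau)\ge S(\tau)$, where $S(\tau):=\min_{\bar\theta\in(\frac\pi3,\frac23\pi]}\Per_{\ffi_{\bar\theta}}(\Om_\tau)$. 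On the other hand, Proposition~\ref{pcno}(ii) applied to the Caccioppoli partition $\omega_1:=W_{\vth_1}$, $\omega_2:=(W_{\vth_2}+\tau)\setminus W_{\vth_1}$ of $\Om_\tau$ with orientations $\vth_1,\vth_2$ produces a sequence satisfying (i)--(ii) of Theorem~\ref{compthm} with limit set $\Om_\tau$, hence admissible in the infimum defining $V$; therefore $V\le Q(\tau):=\Per_{\ffi_{\vth_1}}(\omega_1)+\Per_{\ffi_{\vth_2}}(\omega_2)$. Everything reduces to producing $\bar m>0$ with $Q(\tau)<S(\tau)$ whenever $m(\tau)\le\bar m$, as this yields the contradiction $S(\tau)\le V\le Q(\tau)<S(\tau)$ and forces $\sharp J\ge2$.

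To compare $S(\tau)$ and $Q(\tau)$ I first record the zero--overlap values. Put $L_0:=\Per_\ffi(W)$; by the rotation covariance of \eqref{defffian}--\eqref{gener} one has $\Per_{\ffi_{\bar\theta}}(W_{\bar\theta})=L_0$ for every $\bar\theta$. Using the uniqueness of the Wulff shape as the translation--unique minimizer of $\Per_{\ffi_{\bar\theta}}$ at fixed volume \cite{FM}, together with the fact that within the fundamental domain $(\frac\pi3,\frac23\pi]$ distinct orientations yield non--congruent hexagons, I obtain $\Per_{\ffi_{\bar\theta}}(W_{\vth_i})\ge L_0$ with equality only if $\vth_i=\bar\theta$. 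Since $\vth_1\neq\vth_2$, at least one inequality is strict for every $\bar\theta$, so the continuous, $\frac\pi3$--periodic map $\bar\theta\mapsto\Per_{\ffi_{\bar\theta}}(W_{\vth_1})+\Per_{\ffi_{\bar\theta}}(W_{\vth_2})$ attains on the compact parameter circle a minimum $S_0>2L_0$; set $c_0:=\tfrac12(S_0-2L_0)>0$. The polycrystal target is $2L_0$, each grain being a Wulff shape for its own orientation.

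The remaining work is to control the dependence on the overlap, and the step I expect to be the main obstacle is the transversality estimate. Because $\vth_1\neq\vth_2$ in $(\frac\pi3,\frac23\pi]$, the edge directions $\{\vth_1+k\frac\pi3\}_k$ and $\{\vth_2+k\frac\pi3\}_k$ are disjoint, so the edges of $W_{\vth_1}$ and of $W_{\vth_2}+\tau$ are pairwise non--parallel and cross at angles bounded below by some $\alpha_0(\vth_1,\vth_2)>0$. This forces the convex overlap $W_{\vth_1}\cap(W_{\vth_2}+\tau)$, of area $m(\tau)$, to have diameter $O(\sqrt{m(\tau)})$, so that both $\H^1\big(\partial W_{\vth_1}\cap(W_{\vth_2}+\tau)\big)$ and $\H^1\big(\partial(W_{\vth_2}+\tau)\cap W_{\vth_1}\big)$ are $O(\sqrt{m(\tau)})$; here it is essential that the two hexagons have no parallel edges, otherwise a thin sliver of small area but $O(1)$ perimeter could occur and the comparison would break down. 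Granting this, I decompose the boundary of a union into the parts of each boundary lying outside the other set, and use $\ffi_{\bar\theta}\le\frac2{\sqrt3}$ on $\S^1$, to get $S(\tau)\ge S_0-C\sqrt{m(\tau)}$; cutting the overlap off the second hexagon gives likewise $Q(\tau)\le 2L_0+C\sqrt{m(\tau)}$. Hence $S(\tau)-Q(\tau)\ge2c_0-C'\sqrt{m(\tau)}$, which is positive as soon as $m(\tau)\le\bar m$ for a suitable $\bar m=\bar m(\vth_1,\vth_2)$, completing the contradiction.
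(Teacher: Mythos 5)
Your proposal is correct, and its skeleton is the same as the paper's: assume by contradiction that $\theta=\hat\theta\chi_{\Om_\tau}$ is constant, bound the infimum from above by the natural two--grain competitor via Proposition \ref{pcno}(ii) (roughly $2L_0$ plus an overlap error) and from below via the single--crystal $\Gamma$--liminf \eqref{Gammaliminf} (roughly the misaligned value minus an overlap error), and note the incompatibility once $m(\tau)$ is small. Where you genuinely differ is in how the two quantitative ingredients are obtained. The paper simply \emph{asserts} the needed moduli of continuity: a $p(\cdot)$ quantifying that $\ffi_{\bar\theta}(\nu)\ge 1+p(\bar\theta-\vth_j)$ on the normals of $W_{\vth_j}$, and $l(m)$, $r(m)$ controlling interface/hidden boundary length in terms of the overlap area. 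You instead obtain the orientation gap $S_0>2L_0$ softly, from the Fonseca--M\"uller uniqueness of the Wulff shape together with continuity and compactness of the ($\frac\pi3$--periodic) orientation circle, and you obtain the $O\bigl(\sqrt{m}\bigr)$ boundary--length estimates from edge transversality: since $\vth_1\neq\vth_2$ in a fundamental domain of length $\frac\pi3$, no edges of the two hexagons are parallel, so the convex overlap of area $m$ has diameter $O\bigl(\sqrt{m}\bigr)$. You correctly flag that this is precisely where $\vth_1\neq\vth_2$ enters and that otherwise a thin sliver of small area but order--one perimeter would destroy the comparison --- a point the paper leaves entirely implicit inside its unproven moduli, which is why your version is more self-contained on exactly the steps the paper glosses over. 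Two small remarks: first, your uniqueness step (equality in $\Per_{\ffi_{\bar\theta}}(W_{\vth_i})\ge L_0$ only if $\vth_i=\bar\theta$) relies on the paper's stated identification of $W_{\bar\theta}$ as the Wulff shape of $\ffi_{\bar\theta}$, whose conventions in \eqref{gener} and \eqref{defffian} are actually off by a rotation of $\frac\pi6$; this is the paper's slip, inherited equally by its own proof, and harmless once the convention is fixed. Second, ``non--congruent'' should read ``not translates of one another'' (the hexagons are of course congruent), which is what uniqueness up to translation requires.
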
  
\begin{figure}[h!]
	\centering
	\def\svgwidth{0.5\textwidth}
	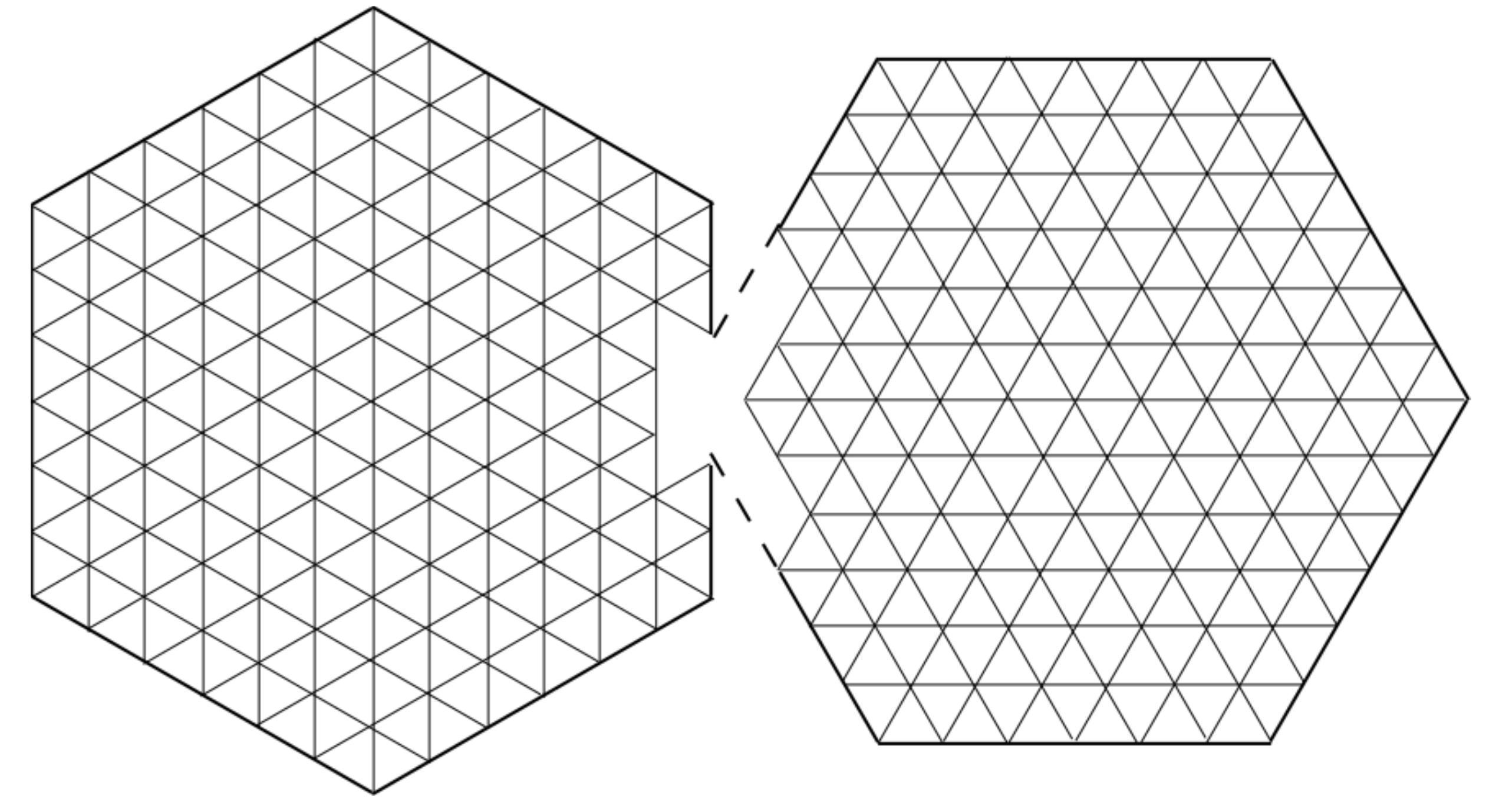
	\caption{ polycrystal.}	\label{policry}
\end{figure}
\begin{proof}
Notice that we can always write $\Om_\tau=\omega_1 \cup \omega_2$\,, with $\omega_1$ and $\omega_2$ open disjoint set such that $S:=\partial \omega_1 \cap \partial \omega_2$ is a segment and $\partial \omega_j \setminus S \subset W_{\vth_j}$ for $j=1,2$\,. Moreover, there exists a modulus of continuity $l(m)\to 0$ as $m\to 0$
such that  $\H^1(S)\le l(m)$ with $m=m(\tau)$ defined in \eqref{defm}.

By \eqref{Gammalimsupp} there exists a sequence $\{\bar \lambda_\e\}$ such that 
\begin{multline}\label{smallm1}
\lim_{n\to\infty} \e_n( \E_{\e_n} (\mu_{\e_n}) + 3  \mu_{\e_n}(\R^2))= \inf_{\e^2 \frac{\sqrt{3}}{2} \lambda_\e \weakstar \chi_{\Om_\tau}} \liminf_{\e\to 0} \e( \E_\e (\lambda_{\e}) + 3  \lambda_{\e}(\R^2))
\\
\le
\limsup_{\ep\to 0}\ep(\E_\ep(\bar\lambda_\ep)+{3}\,\bar\lambda_\ep(\R^2))\le \sum_{j=1}^2 \Per_{\ffi_{\vth_j}}(\omega_j)
\le \sum_{j=1}^2 \Per(W_{\vth_j}) + c\, l (m)\,.
\end{multline}
for some $c<\infty$\,.
In particular, by \eqref{compthm}, $\theta_{\ep_n} (\mu_{\ep_n}) \weakly \theta$ in $SBV_\loc(\R^2)$ for some 
$\theta\in SBV(\Om)$ with $\theta=\sum_{j\in J}\theta_j\chi_{\omega_j}$\,, where $J\subseteq\N$\,,
$\{\omega_j\}_j$ is a Caccioppoli partition of $\Omega$\,, and $\{\theta_j\}_j\subset (\frac\pi 3,\frac 2 3\pi]$\,. It remains to prove that, for $m(\tau)$ small enough, $\theta$ is not constant. If $\theta=\bar \theta\chi_{\Omega_\tau}$ for some $\bar\theta\in(\frac{\pi}{3},\frac{2}{3}\pi]$\,, then, by \eqref{Gammaliminf}, we have

\begin{multline}\label{smallm2}
\lim_{n\to \infty} \e_n( \E_{\e_n} (\mu_{\e_n}) + 3  \mu_{\e_n}(\R^2))\ge \Per_{\ffi_{\bar\theta}}(\Om_\tau)
\\
\ge 
\sum_{j=1}^2 (1+ p(\bar\theta - \theta_j)) \H^1(\partial\Omega_\tau \cap \partial W_{\theta_j}) \ge 
\sum_{j=1}^2 (1+ p(\bar\theta - \theta_j)) \Per(\partial W_{\theta_j}) -  r(m)\,,
\end{multline}
for some moduli of continuity $p\,, \, r:[0,+\infty)\to \R$ which are continuous, vanishing at zero and strictly positive elsewhere. 
Clearly   \eqref{smallm1} and \eqref{smallm2} are not compatible for $m$ smaller than some $\bar m$ depending only on $\vth_1$ and $\vth_2$\,.
\end{proof}
\appendix
\section{Optimal tessellations of the plane}
It is well known that the plane can be tessellated by regular polygons; more precisely, by equilateral triangles, squares and hexagons. 

Fix one of such regular polygons $p$ 
and assume that the edges of $p$ have length equal to one. Let $I=(\Theta_1,\Theta_2]$ be a given  interval, representing the family of  orientations of $p$ , and satisfying suitable  properties listed below. Set $\Theta_{\av}:= \frac12 (\Theta_1+\Theta_2)$\,,  and  
$p_\theta:= e^{i(\theta - \Theta_{\av})} p$ for all $\theta \in \bar I$\,.  The required properties of $I$ are that   $0\notin I$\,, $\H^1(\partial (p_{\Theta_1}+\tau) \cap \partial p_{\Theta_2})=1$ for some $\tau\in\R^2$\,, $\H^1(\partial (p_{\alpha_1}+\tau) \cap \partial p_{\alpha_2})=0$ for every $\alpha_1\,,\alpha_2\in I$ and for every $\tau\in\R^2$\,.

For instance, we can choose $I=(\frac{\pi}{3}, \frac{2}{3}\pi]$  if $p$ is the equilateral triangle or the regular hexagon and $I=(\frac{\pi}{4}, \frac{3}{4}\pi]$  if $p$ is the  square.

Let $\ffi$ anf $\ffi_\theta$ be defined as in \eqref{defffi} and  \eqref{defffian}, with the vectors  $v_i$ in \eqref{gener} replaced by the 
normals $\nu_k$ to $\partial p$\,, i.e., the unitary vectors orthogonal  to the edges of $p$\,. 

For every $\ep >0$ set
$$
F_\ep:= \{ \ep p_\theta + \tau \,:  \, \theta \in I,\, \tau \in \R^2\}\,.
$$
Notice that for all $f\in F_\ep$ there exists a unique $\theta=\theta(f)$ such that $f= \ep p_\theta$ up to a (still unique) translation.

\begin{lemma}\label{a1}
Let $\{\nu_k\}$ be the set of the normals to  $\partial p$\,.
There exists a modulus of continuity $r(\eta)$ with the following property. 
Let $\ffi^\eta: \mathbb S^1\to \R$ be defined by
$$
\ffi^\eta(v) :=
\begin{cases}
1 & \text{ if } \max_k | v \cdot \nu_k |  \ge 1- \eta\,; 
\\
\ffi(v) & \text{ otherwise.} 
\end{cases}
$$
Let 
$\{\Om_\e\}$ be a sequence of  sets of finite perimeter such that 
$\chi_{\Om_\ep} \to \chi_\Om$ in $SBV_\loc(\R^2)$ for some set $\Om$ of finite perimeter.
Then
$$
\liminf_{\ep\to 0} \int_{\partial^{*}\Om_\ep}\ffi^\eta (\nu)\ud \H^1
\ge (1+r(\eta))\Per_{\ffi}(\Omega)\,.
$$
\end{lemma}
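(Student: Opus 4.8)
The plan is to replace the integrand $\ffi^\eta$, which fails to be convex in the one-homogeneous sense, by its convex envelope and then invoke lower semicontinuity of convex anisotropic perimeters. Extend $\ffi^\eta$ to a positively one-homogeneous function on $\R^2$ and let $(\ffi^\eta)^{**}$ be its convex envelope, i.e.\ the largest convex positively one-homogeneous function not exceeding $\ffi^\eta$; recall that its unit ball is the convex hull of $\{\ffi^\eta\le 1\}$. Since $(\ffi^\eta)^{**}\le\ffi^\eta$ pointwise and $(\ffi^\eta)^{**}$ is an (even, positive) norm, the functional $G\mapsto\int_{\partial^*G}(\ffi^\eta)^{**}(\nu)\,\ud\H^1$ is lower semicontinuous with respect to $L^1_\loc$ convergence of characteristic functions, by Reshetnyak lower semicontinuity. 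Hence, for every bounded open set $A$, using $\chi_{\Om_\ep}\to\chi_\Om$ in $SBV_\loc(\R^2)$ (in particular in $L^1(A)$, with the local perimeter bound built into the notion of weak $SBV_\loc$ convergence),
\[
\liminf_{\ep\to 0}\int_{\partial^*\Om_\ep}\ffi^\eta(\nu)\,\ud\H^1
\ge \int_{\partial^*\Om\cap A}(\ffi^\eta)^{**}(\nu)\,\ud\H^1 .
\]
Letting $A\uparrow\R^2$, the statement reduces to the pointwise bound $(\ffi^\eta)^{**}\ge(1+r(\eta))\ffi$ on $\mathbb{S}^1$ for a suitable $r(\eta)\to 0$.

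The core is an elementary convex-geometry estimate on the unit balls. By construction $\{\ffi\le 1\}=\mathrm{conv}\{\pm\nu_k\}$, whose vertices $\pm\nu_k$ lie on $\mathbb{S}^1$ while its edges are chords of $\mathbb{S}^1$; consequently $\ffi\ge 1$ on $\mathbb{S}^1$, with equality exactly at the directions $\pm\nu_k$. Since $\ffi^\eta(v)=1$ when $\max_k|v\cdot\nu_k|\ge 1-\eta$ and $\ffi^\eta(v)=\ffi(v)\ge 1$ otherwise, we have $\ffi^\eta\le\ffi$, and $\{\ffi^\eta\le 1\}$ is obtained from $\{\ffi\le 1\}$ by replacing, in the cone of directions within angle $\alpha_\eta$ of each vertex $\pm\nu_k$ (where $\cos\alpha_\eta=1-\eta$, so $\alpha_\eta\to 0$), the polygon boundary by the corresponding arc of $\mathbb{S}^1$. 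I would then show that this enlarged set lies in a mildly dilated polygon: because near each vertex the edges of $\{\ffi\le 1\}$ leave $\mathbb{S}^1$ at a positive angle, the radial gap between chord and circle grows linearly in the angular distance from the vertex, so there is $s(\eta)\to 0^+$ with $\{\ffi^\eta\le 1\}\subseteq(1+s(\eta))\{\ffi\le 1\}$. As the right-hand side is convex, the same inclusion holds for the convex hull of the left-hand side, that is for $\{(\ffi^\eta)^{**}\le 1\}$. Rewriting this inclusion of unit balls in terms of gauges yields $(\ffi^\eta)^{**}\ge\frac{1}{1+s(\eta)}\ffi=(1+r(\eta))\ffi$ with $r(\eta):=\frac{1}{1+s(\eta)}-1\to 0$ (note $r(\eta)\le 0$, consistent with $(\ffi^\eta)^{**}\le\ffi$).

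Combining the two displays gives $\liminf_{\ep\to 0}\int_{\partial^*\Om_\ep}\ffi^\eta(\nu)\,\ud\H^1\ge(1+r(\eta))\Per_\ffi(\Om)$, as claimed; the argument applies verbatim to each admissible polygon $p$ (equilateral triangle, square, regular hexagon), the only polygon-dependent ingredient being the finitely many vertex directions $\nu_k$, over which $s(\eta)$ is taken uniformly. I expect the \emph{main obstacle} to be the quantitative convexification estimate of the middle paragraph: one must control precisely how far the unit ball of $\ffi^\eta$ can bulge beyond that of $\ffi$ near the vertices $\pm\nu_k$, and verify that passing to the convex hull does not destroy the inclusion $\{\ffi^\eta\le 1\}\subseteq(1+s(\eta))\{\ffi\le 1\}$; this is exactly where the linear growth of the chord-to-circle gap is used to quantify the dependence of $r(\eta)$ on $\eta$. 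A secondary, routine point is to justify the Reshetnyak lower semicontinuity for the convex envelope and to pass from the localized inequality on bounded sets $A$ to the global perimeter by monotone exhaustion.
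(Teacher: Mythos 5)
Your proposal is correct and rests on essentially the same two ingredients as the paper's proof: a pointwise comparison of $\ffi^\eta$ with $(1+r(\eta))\,\ffi$ on $\mathbb{S}^1$ (the paper gets it from the uniform estimate $|\ffi^\eta-\ffi|\le c(\eta)$ together with $\ffi\ge 1$ on $\mathbb{S}^1$, you from the unit-ball inclusion $\{\ffi^\eta\le 1\}\subseteq (1+s(\eta))\{\ffi\le 1\}$), followed by lower semicontinuity of a convex anisotropic perimeter under $L^1_{\loc}$ convergence of sets with locally bounded perimeter. Your detour through the convex envelope $(\ffi^\eta)^{**}$ is sound but superfluous: the inclusion of unit balls already gives the pointwise bound $\ffi^\eta\ge \frac{1}{1+s(\eta)}\,\ffi$ on $\mathbb{S}^1$ without convexifying, so one can apply lower semicontinuity of $\Per_{\ffi}$ directly, which is exactly what the paper does.
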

\begin{proof}
There exists $c(\eta)>0$ with $c(\eta)\to 1$ as $\eta\to 0$ such that  $|\ffi^\eta(v) - \ffi (v)|\le c(\eta) $  for all $v\in \mathbb S^1$\,. Therefore the lemma is an easy consequence of the lower semicontinuity of the $\ffi$-perimeter $\Per_\ffi$\,.
\end{proof}
In what follows, for every $\ep>0$\,, we denote by $\Phi_\ep$ the set of family of faces $H_\ep\subset F_\ep$ whose interiors are pairwise disjoint.
Moreover, we set
\[
\O_\ep:=\Big\{\Om\subset\R^2\,:\, \Om=\bigcup_{f\in H_\ep} f,\  \text{for some } H_\ep\in \Phi_\ep
\Big\}\,.
\]

Now we prove a $\Gamma$-convergence result.

\begin{theorem}\label{teoapp}
The following $\Gamma$-convergence result holds true.
\begin{itemize}
\item[(a)] (Compactness) Let $H_\ep \in \Phi_\ep$ and set  
$$
\Omega_\ep := \bigcup_{f\in H_\ep} f\,, \qquad \theta_\ep:= \sum_{f\in H_\ep} \theta(f) \chi_f\,.
$$
Assume that $\Per (\Om_\ep)\le C$\,. Then, up to a subsequence, $\chi_{\Om_\ep} \weakly \chi_\Om$ in $SBV_\loc(\R^2)$ for some set $\Om$ of finite perimeter. Moreover,  
$\theta_\ep \weakly \theta$ in $SBV_\loc(\R^2)$\,, for some $\theta=\sum_{j\in J}\theta_j\chi_{\omega_j}$ in $SBV(\Omega)$\,, where $J\subseteq\N$\,,
 $\{\omega_j\}_j$ is a Caccioppoli partition of $\Omega$\,, and $\{\theta_j\}_j\subset I$\,.
\item [(b)] ($\Gamma$-liminf inequality) Let $H_\ep$\,,   $\Om_\ep$\,, $\theta_\ep$\,, $\Om$\,, and  $\theta$ be as in (a), and let $A$ be a an open set. Then
\begin{equation}\label{Gammaliminfgen}
\liminf_{\ep\to 0}\Per(\Omega_\ep,A)\ge 
\sum_{j\in J}\Per_{\ffi_{\theta_j}}(\omega_j,A)\,.
\end{equation}
\item [(c)] ($\Gamma$-limsup inequality) For every set $\Omega\subset\R^2$ of finite perimeter and for every $\theta\in SBV(\Om)$\,, there exists a sequence $\{H_\ep\}$ satisfying the claim in (a) 
 such that
\begin{equation}\label{Gammalimsupgen}
\limsup_{\ep\to 0}\Per(\Omega_\ep)\le
\sum_{j\in J}\Per_{\ffi_{\theta_j}}(\omega_j)\,.
\end{equation}
\end{itemize}
\end{theorem}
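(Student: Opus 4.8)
The plan is to prove the three items in turn, reducing each to the single‑orientation situation in which $\Om_\ep$ is locally a tiling by translates of one fixed polygon $\ep p_\theta$. The key elementary fact, used throughout, is a calibration: by the definition of $\ffi_\theta$ (whose generators are the rotated facet normals of $p_\theta$), each facet normal of $p_\theta$ is a vertex of the unit ball of $\ffi_\theta$, so $\ffi_\theta$ equals $1$ on it; hence for any union of $\ep p_\theta$‑tiles every boundary edge has Euclidean length equal to its $\ffi_\theta$‑length, and the Euclidean perimeter of the union coincides with its $\ffi_\theta$‑perimeter. For \emph{compactness (a)} I would apply Corollary \ref{SBVcompcor} to the piecewise constant maps $\chi_{\Om_\ep}$ and $\theta_\ep$. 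Both have vanishing approximate gradient and bounded $L^\infty$ norm (by $1$ and by $\max\{|\Theta_1|,|\Theta_2|\}$), so the only nontrivial point is the control of the jump sets. Here I claim that, up to $\H^1$‑null sets, $S_{\theta_\ep}=\partial^*\Om_\ep$, whence $\H^1(S_{\theta_\ep})=\H^1(S_{\chi_{\Om_\ep}})=\Per(\Om_\ep)\le C$. The inclusion $\partial^*\Om_\ep\subseteq S_{\theta_\ep}$ is clear, since across $\partial^*\Om_\ep$ the field jumps from a value in $I$ to $0\notin I$. For the reverse inclusion I would invoke the genericity hypothesis $\H^1(\partial(p_{\alpha_1}+\tau)\cap\partial p_{\alpha_2})=0$: after rescaling by $\ep^{-1}$ it shows that two faces of $H_\ep$ carrying different orientations cannot share a segment of positive length, so $\theta_\ep$ has no interior jump and every jump sits on $\partial^*\Om_\ep$. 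Corollary \ref{SBVcompcor} then gives subsequential $SBV_\loc$‑limits $\chi_\Om$ and $\theta$, the first being a characteristic function as an a.e.\ limit of $\{0,1\}$‑valued maps. Finally $\nabla\theta_\ep\weakly\nabla\theta$ in $L^1$ forces $\nabla\theta=0$, and an $SBV$ function with vanishing approximate gradient is piecewise constant; this yields the Caccioppoli partition $\{\omega_j\}$ of $\Om$ and the values $\theta_j\in\bar I$, the boundary value $\Theta_1$ being identified with $\Theta_2\in I$ according to the half‑open convention defining $I$.

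\emph{$\Gamma$-liminf (b).} I would argue by blow‑up on the interface measures. Assuming the $\liminf$ is a finite limit along a subsequence, set $\mu_\ep:=\H^1\res(\partial^*\Om_\ep\cap A)$ and extract a weak‑$*$ limit $\mu$, so that $\liminf_\ep\Per(\Om_\ep,A)\ge\mu(A)$. It then suffices to prove the measure bound $\mu\ge\sum_j\ffi_{\theta_j}(\nu_{\omega_j})\,\H^1\res\partial^*\omega_j$, which I would verify through the Radon--Nikodym density of $\mu$ at $\H^1$‑a.e.\ point of $\bigcup_j\partial^*\omega_j$. At a point $x_0$ lying on $\partial^*\omega_j$ only, the blow‑up shows that near $x_0$ the set $\Om_\ep$ is, to leading order, a tiling by copies of $\ep p_\theta$ with a single orientation $\theta\to\theta_j$; since for such a tiling $\H^1(\partial^*\Om_\ep\cap B)=\int_{\partial^*\Om_\ep\cap B}\ffi_\theta(\nu)\ud\H^1$ and $\ffi_\theta$ is continuous in $\theta$, lower semicontinuity of the anisotropic perimeter gives density at least $\ffi_{\theta_j}(\nu_{\omega_j}(x_0))$. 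At a genuine grain boundary $x_0\in\partial^*\omega_j\cap\partial^*\omega_k$ with $j\neq k$, the genericity hypothesis forbids the two tilings from matching along a segment, so $\partial^*\Om_\ep$ carries the exposed boundaries of both grains and the density is at least $\ffi_{\theta_j}(\nu)+\ffi_{\theta_k}(\nu)$, exactly the joint contribution of the $j$- and $k$-terms at $x_0$. Integrating over $A$ yields \eqref{Gammaliminfgen}.

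\emph{$\Gamma$-limsup (c).} I would combine the single‑crystal recovery sequence with a density argument for partitions. By density in $\Gamma$‑convergence it is enough to treat the case in which the grains $\omega_j$ are finitely many, have polyhedral boundaries and pairwise disjoint closures; this requires approximating a general $\theta=\sum_j\theta_j\chi_{\omega_j}$ in $L^1$ while $\sum_j\Per_{\ffi_{\theta_j}}(\omega_j)$ converges, which is the standard density theorem for Caccioppoli partitions. For each such grain I would take all tiles of the $\theta_j$‑oriented lattice contained in $\omega_j$, namely $H_\ep^{(j)}:=\{f\in F_\ep:\theta(f)=\theta_j,\ f\subset\omega_j\}$, set $\Om_\ep^{(j)}:=\bigcup_{f\in H_\ep^{(j)}}f$, and put $H_\ep:=\bigcup_jH_\ep^{(j)}$, whose tiles have pairwise disjoint interiors precisely because the $\overline{\omega_j}$ are disjoint. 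Then $\chi_{\Om_\ep}\to\chi_\Om$ and $\theta_\ep\to\theta$ in $L^1_\loc$, while the disjointness of the closures gives $\Per(\Om_\ep)=\sum_j\Per(\Om_\ep^{(j)})=\sum_j\int_{\partial^*\Om_\ep^{(j)}}\ffi_{\theta_j}(\nu)\ud\H^1\to\sum_j\Per_{\ffi_{\theta_j}}(\omega_j)$, the last limit being the single‑crystal computation already used in Theorem \ref{gammaconvHR}(ii). This proves \eqref{Gammalimsupgen}.

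\emph{Main obstacle.} The crux is the density computation in (b): one must show, uniformly along the vanishing‑$\ep$ blow‑up, that a tiling by $\ep p_\theta$ cannot separate a half‑plane of normal $\nu$ at Euclidean cost below $\ffi_\theta(\nu)$ per unit length, and then carefully transfer this calibration from the approximating orientations to the limit value $\theta_j$. The genericity assumption on $I$ is exactly what keeps the grain‑boundary bookkeeping clean, since it guarantees that interfaces between differently oriented grains are always doubly covered by $\partial^*\Om_\ep$ and never cancel.
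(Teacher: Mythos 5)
Your proposal is correct in substance, and for parts (a) and (c) it follows essentially the same route as the paper: compactness comes from Corollary \ref{SBVcompcor} once $\H^1(S_{\theta_\ep})\le\Per(\Om_\ep)\le C$ is established (your explicit use of the genericity hypothesis on $I$ and of $0\notin I$ to identify $S_{\theta_\ep}$ with $\partial^*\Om_\ep$ up to $\H^1$-null sets is precisely what the paper leaves implicit), and the limsup is obtained by polyhedral density for Caccioppoli partitions (the paper cites \cite{BCG}) followed by a per-grain lattice construction; note only that your displayed formula $H_\ep^{(j)}=\{f\in F_\ep:\theta(f)=\theta_j,\ f\subset\omega_j\}$ literally collects \emph{all} translates of $\ep p_{\theta_j}$ inside $\omega_j$, which overlap and hence do not form an element of $\Phi_\ep$ --- what you want, and what your words say, is the set of tiles of one fixed $\theta_j$-oriented tessellation contained in $\omega_j$. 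Where you genuinely depart from the paper is (b). The paper argues globally: it splits $\Om_\ep$ into the unions $\Om_\ep(\vth)$ of equal-orientation tiles, uses genericity to get the exact additivity $\Per(\Om_\ep,A)=\sum_{\vth}\Per(\Om_\ep(\vth),A)$, groups the orientations into clusters of width $\eta$ around finitely many values $\vth_1,\dots,\vth_M$, applies Lemma \ref{a1} (lower semicontinuity for the frozen integrand $\ffi^\eta$, which equals $1$ near the facet normals) to each cluster, and finally sends $\eta\to 0$ and $M\to\infty$. You instead localize: weak-$*$ limits of the interface measures $\H^1\res(\partial^*\Om_\ep\cap A)$ and a Radon--Nikodym density estimate at $\H^1$-a.e.\ point of $\bigcup_j\partial^*\omega_j$, with the doubled density $\ffi_{\theta_j}(\nu)+\ffi_{\theta_k}(\nu)$ at grain boundaries. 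Both proofs rest on the same three facts --- the calibration $\Per=\Per_{\ffi_\theta}$ for unions of $\theta$-tiles, perimeter additivity across distinct orientations (genericity), and lower semicontinuity of anisotropic perimeters --- so your route is a legitimate alternative: it yields the localized inequality on arbitrary open sets $A$ with no extra work and treats grain boundaries transparently, at the price of the blow-up machinery, whereas the paper's clustering argument is blow-up-free but needs the $\eta$/$M$ bookkeeping. One step of your blow-up requires the same care the paper takes globally: near $x_0\in\partial^*\omega_j$ the configuration is \emph{not} a single-orientation tiling ``to leading order'' in perimeter, since tiles with orientation far from $\theta_j$ can have vanishing area yet non-vanishing boundary length; you must first discard them using the exact additivity supplied by genericity (harmless for a lower bound, as their contribution is non-negative) and only then apply the frozen-anisotropy/lower-semicontinuity step to the tiles with orientation close to $\theta_j$. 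This is exactly the content of the paper's Lemma \ref{a1}, so your argument closes, but the reduction should be stated rather than absorbed into ``to leading order''.
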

\begin{proof}
{\it Proof of (a).} By the very definition of $\theta(f)$\,, we have that $\|\theta_\ep\|_{L^\infty}\le \Theta_2$\,. Moreover, by the uniform bound on $\Per(\Omega_\ep)$\,, we obtain that $\H^1(S_{\theta_\ep})\le C$\,. It follows that $\|\theta_\e\|_{BV} \le C$ for some constant $C<\infty$ independent of $\e$\,. Then the claim follows from Corollary \ref{SBVcompcor}.

{\it Proof of (b).} For every $\vth\in I$ let $H_\e(\vth):= \{f\in H_\ep : \, \theta(f) = \vth\}$\,, and set $\Om_\e(\vth):= \sum_{f\in H_\e(\vth)} \chi_f$\,. Notice that if $\vth_1\neq \vth_2$\,, then 
$$
\Per(\Om_\e(\vth_1)\cup\Om_\e(\vth_2),A)=
\Per(\Om_\e(\vth_1),A)+ \Per(\Om_\e(\vth_2),A),
$$
for every open bounded set $A\subset\R^2$\,.
It follows that there exists an at most countable set of indices $J$ and a set $\{\vth_n\}_{n\in J}\subset I$ such that $\Om_\e(\vth_n) \neq \emptyset$ for every $n\in J$ and  
$$
\Per(\Om_\e,A) = \sum_{n\in J}
\Per(\Om_\e(\vth_n),A)\quad\textrm{for every open bounded set }A\subset\R^2\,.
$$
Let $M\in\N$ and consider $\vth_1\,\ldots, \vth_M\in J$\,. Let $\eta>0$ be such that 
$$
|\vth_i-\vth_j|>\eta \qquad \text{ for all } 1\le i<j\le M\,.
$$
Moreover, for every $1\le i\le M$ set $I^\eta_{\e,i}:=\{\vth\in I : |\vth-\vth_i|<\frac{\eta}{2}\}$\,, and 
$$
\Om_{\e,i}^{\eta}:=\bigcup_{\vth\in I^\eta_{\e,i}} \Om_\e(\vth)\,.
$$
Then,  $\chi_{\Om_\e^{i,\eta}}\to \chi_{\Om^\eta_i}$
in $L^1_{\loc}(\R^2)$\,, with 
$$
\Om^\eta_i:= \bigcup_{j\in J\,:\, |\vth_j -\vth_i|\le\frac{\eta}{2}} \omega_j\,. 
$$
Trivially, for every $i=1,\ldots,M$ we have that $\Om^\eta_i \to \omega_i$ in $L^1_{\loc}(\R^2)$ as $\eta\to 0$ { and  $|\theta_i-\vth_i|\le \frac{\eta}{2}$}\,.
By Lemma \ref{a1} we deduce that for every open bounded set $A$ it holds
$$
\liminf_{\ep\to 0}\Per(\Omega_\ep,A)\ge 
\sum_{i=1}^M \liminf_{\ep\to 0}\Per(\Omega_{\ep,i}^{\eta},A)\ge (1+r(\eta)) 
\sum_{i=1}^M \Per_{\ffi_{\theta_i}}(\Omega^\eta_i,A)\,.
$$
Letting first $\eta\to 0$ and then $M\to +\infty$   we deduce the $\Gamma$-liminf inequality (b).\\

{\it Proof of (c).} Since partitions with polyhedral boundary are dense (see \cite{BCG}), by standard density arguments in $\Gamma$-convergence we can assume that $\omega_i$ are polygons. In this case, the construction of a recovery sequence satisfying \eqref{Gammaliminfgen} follows by arguing as in the proof of \eqref{Gammalimsupp}.
\end{proof}
For $\eps>0$ we can define the following functional
\[
\Per_\ep(\Om)=\left\{\begin{array}{ll}
\Per(\Om)&\textrm{if }\Om\in\O_\eps\,,
\\
+\infty&\textrm{otherwise.}
\end{array}\right.
\]

We state the following corollary which is a direct consequence of Theorem \ref{teoapp}.
\begin{corollary}
The functionals $\Per_\ep$ $\Gamma$-converge, with respect to the convergence in $L^1_{\loc}(\R^2)$ of characteristic functions, to the functional   $\Per_0$ defined by
\begin{equation*}
\begin{aligned}
\Per_0(\Om) &:= \min \Big \{ \sum_{j\in J}\Per_{\ffi_{\theta_j}}(\omega_j)\,: \,   \sum_{j\in J} \theta_j \chi_{\omega_j} \in SBV(\Om)\,, \\ 
&\phantom{\sum_{j\in J}\Per_{\ffi_{\theta_j}}(\omega_j)}\{\omega_j\}_{j} \text{ is a Caccioppoli partition of } \Om
\,,\,\{\theta_j\}_j\subset I\Big \}
\,,
\end{aligned}
\end{equation*}
for all sets $\Om$ with finite perimeter.
\end{corollary}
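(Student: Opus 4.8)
The plan is to read off the two halves of the $\Gamma$-convergence statement directly from the three parts of Theorem~\ref{teoapp}, treating $\Per_0(\Om)$ first as the \emph{infimum} over admissible partitions and recovering attainment of the minimum at the end. The key structural remark, used in both inequalities, is that whenever $\Per_\ep(\Om_\ep)<+\infty$ we have $\Om_\ep\in\O_\ep$, i.e. $\Om_\ep=\bigcup_{f\in H_\ep}f$ for some $H_\ep\in\Phi_\ep$, so that $\Per_\ep(\Om_\ep)=\Per(\Om_\ep)$ and Theorem~\ref{teoapp} applies verbatim.

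For the $\Gamma$-liminf inequality I would take an arbitrary sequence $\Om_\ep\to\Om$ in $L^1_\loc(\R^2)$ and, after passing to a subsequence realizing $\liminf_\ep\Per_\ep(\Om_\ep)$, assume this value finite (otherwise there is nothing to prove). Then $\Per(\Om_\ep)\le C$ along this subsequence, and the compactness part (a) provides, up to a further subsequence, a limiting set and an orientation $\theta=\sum_{j\in J}\theta_j\chi_{\omega_j}\in SBV$ with $\{\omega_j\}_j$ a Caccioppoli partition and $\{\theta_j\}_j\subset I$. Since weak $SBV_\loc$-convergence entails $L^1_\loc$-convergence, uniqueness of the limit identifies the limiting set with the prescribed $\Om$; hence $(\{\omega_j\},\{\theta_j\})$ is an admissible competitor for $\Per_0(\Om)$ and $\theta\in SBV(\Om)$. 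Applying the localized bound \eqref{Gammaliminfgen} on an exhausting sequence of bounded open sets $A_n\uparrow\R^2$, using $\Per(\Om_\ep)\ge\Per(\Om_\ep,A_n)$ together with monotone convergence of $\sum_j\Per_{\ffi_{\theta_j}}(\omega_j,A_n)$, I would obtain $\liminf_\ep\Per_\ep(\Om_\ep)\ge\sum_j\Per_{\ffi_{\theta_j}}(\omega_j)\ge\Per_0(\Om)$.

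For the $\Gamma$-limsup inequality I would invoke the general fact that the $\Gamma$-upper limit is an infimum over recovery sequences. Fixing any admissible pair $(\{\omega_j\},\{\theta_j\})$ with $\sum_j\theta_j\chi_{\omega_j}\in SBV(\Om)$, part (c) supplies $\{H_\ep\}$ with $\Om_\ep:=\bigcup_{f\in H_\ep}f\to\Om$ and $\limsup_\ep\Per(\Om_\ep)\le\sum_j\Per_{\ffi_{\theta_j}}(\omega_j)$, as in \eqref{Gammalimsupgen}. Since $\Per_\ep(\Om_\ep)=\Per(\Om_\ep)$, this is an admissible competitor sequence, so the $\Gamma$-limsup at $\Om$ is bounded by $\sum_j\Per_{\ffi_{\theta_j}}(\omega_j)$; taking the infimum over all admissible pairs yields $\Gamma\text{-}\limsup_\ep\Per_\ep(\Om)\le\Per_0(\Om)$. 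To justify that the infimum defining $\Per_0$ is a minimum, I would then apply (a) and the already proven liminf inequality to a recovery sequence for $\Om$: its limiting partition satisfies $\sum_j\Per_{\ffi_{\theta_j}}(\omega_j)\le\Per_0(\Om)$, so the infimum is attained.

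No step here is a genuine obstacle, since the corollary is essentially a repackaging of Theorem~\ref{teoapp}; the only points requiring care are the bookkeeping in passing from the localized inequality \eqref{Gammaliminfgen} to the global perimeter (the supremum–sum interchange, justified by nonnegativity and monotonicity of the relative $\ffi_{\theta_j}$-perimeters) and the identification of the $SBV_\loc$-limit produced by (a) with the prescribed $L^1_\loc$-limit $\Om$.
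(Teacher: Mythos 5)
Your proposal is correct and follows exactly the route the paper intends: the paper states this corollary as a direct consequence of Theorem~\ref{teoapp} without further detail, and your argument is precisely that deduction, using (a) for compactness and identification of the $L^1_\loc$ limit, the localized inequality \eqref{Gammaliminfgen} with an exhaustion of $\R^2$ for the liminf bound, and \eqref{Gammalimsupgen} for the recovery sequences. Your additional care in passing from relative to global perimeters and in showing the infimum defining $\Per_0$ is attained (via compactness applied to an optimal sequence) fills in bookkeeping the paper leaves implicit.
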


Clearly we have the following inequality, valid for all sets $\Om$ with finite perimeter:
\begin{equation}\label{sop}
\Per_0(\Om) \le \min_{\theta\in I}\Per_{\ffi_{\theta}}(\Omega)\,.
\end{equation}

The considerations in Subsection \ref{svp} can be easily extended to the functionals $\Per_\e$ and $\Per_0$\,. In particular, there exist sets $\Om$ such that the inequality in \eqref{sop} is strict and, on the other hand, 
\[
\min_{\Om:\,|\Om|=m} \Per_0(\Om) = \sqrt{m}\,\Per(W_\ffi) \, ,
\]
where $W_\ffi$ is the Wulff shape which  solves the  isoperimetric problem corresponding to the anisotropy $\ffi$\,, among sets with unit area. 
Moreover, letting $\Omega_\eps$ be minimizers of 
\[
\min_{\Om:\,|\Om|=m} \Per_\eps(\Om) \,,
\]
it follows that, up to rotations and translations, $\Omega_\eps$ converge
to $ \sqrt{m}\,W_\ffi$ in the $L^1$-topology.



\end{document}